\title{Hypergraph Laplace Operators for Chemical Reaction Networks}
\author{Jürgen Jost}
\address{Max Planck Institute for Mathematics in the Sciences, D-04103 Leipzig, Germany}
\email{jjost@mis.mpg.de}
\author{Raffaella Mulas}
\address{Max Planck Institute for Mathematics in the Sciences, D-04103 Leipzig, Germany}
\email{raffaella.mulas@mis.mpg.de}
\theoremstyle{plain}
\newtheorem{theorem}{Theorem}
\newtheorem{lem}[theorem]{Lemma}
\newtheorem{cor}[theorem]{Corollary}
\newtheorem{prop}[theorem]{Proposition}
\theoremstyle{definition}
\newtheorem*{definition}{Definition}
\newtheorem{ex}{Example}
\theoremstyle{remark}
\newtheorem{rmk}[theorem]{Remark}
\newtheorem*{notation}{Notation}
\newtheorem{recall}[theorem]{Recall}
\begin{document}
	\maketitle
	
	\begin{abstract}We generalize the \emph{normalized combinatorial Laplace operator} for graphs by defining two Laplace operators for hypergraphs that can be useful in the study of chemical reaction networks. We also investigate some properties of their spectra.	\end{abstract}
	\section{Introduction}
 At an abstract level, chemical reaction networks can be modelled as directed hypergraphs in which each vertex represents a chemical element and each hyperedge represents a chemical reaction involving the elements that it contains as vertices. In this paper, we therefore define and study a normalized combinatorial Laplace operator for hypergraphs, with the aim of investigating reaction networks through the spectrum of that operator, that is, its collection of eigenvalues.  We already know that the spectrum of the \emph{normalized combinatorial Laplace operator} (that from now on we will just call \emph{Laplace operator}) for graphs encodes important information about the graphs. For example, we know that the multiplicity of the eigenvalue $0$ for the Laplacian on vertices $L^0$ is equal to the number of connected components of the graph; we know that the multiplicity of the eigenvalue $0$ for the Laplacian on edges $L^1$ is equal to the number of cycles; the largest eigenvalue reaches its maximum value exactly for bipartite graphs and its minimum value exactly for complete graphs. While a graph is not completely determined by its spectrum -- there exist \emph{isospectral graphs}, that is, different graphs with the same spectrum --, the spectrum does capture the important qualitative properties of a graph. That is, classifying graphs by their spectrum may ignore some little details, but seems to be quite useful in the presence of big data, in particular since eigenvalue computations can be performed with tools from linear algebra.\newline

	In Section \ref{Section Basic definitions and assumptions} we define the basic definitions regarding the hypergraphs that represent chemical reaction networks and we make some important assumptions motivated by the chemical interpretation. In Section \ref{Section Generalized Laplace Operators} we construct our Laplace operators for hypergraphs by generalizing, in the most natural way, the construction of the graph Laplace operators. We also prove that their restriction to graphs coincides with the well-known graph Laplace operators. In Section \ref{Section First properties} we prove the first basic properties of our Laplace operators; in Section \ref{Section The eigenvalue 0} we talk about the multiplicity of the eigenvalue $0$ for our two Laplacians. In Section \ref{Section Applications of the Min-max Principle} we recall and apply the \emph{Courant-Fischer-Weyl min-max principle} in order to get more insight about the spectra of our Laplacians and, in particular, in Section \ref{Largest eigenvalue} we study the largest eigenvalue: we prove that it reaches its maximum value exactly for \emph{bipartite hypergraphs} and we see when exactly it reaches its minimum value, which is in this case $0$. Finally, in Section \ref{Isospectral hypergraphs}, we talk about \emph{isospectral hypergraphs}.

		\section{Basic definitions and assumptions}\label{Section Basic definitions and assumptions}
As already mentioned in the introduction, chemical reaction networks can be modelled by \emph{directed} hypergraphs. Each reaction is a directed hyperedge, mapping a collection of vertices, its educts or inputs, to another collection, its products or outputs. We could therefore define a suitable Laplace type operator for a directed hypergraph and study its spectrum, as pioneered by F.Bauer \cite{Bauer} for directed graphs. Since such an operator is not self-adjoint w.r.t. some scalar product, however, in general its eigenvalues will not be real, but have nonzero imaginary parts. Here, however, we prefer to work with symmetric operators and real eigenvalues. That would suggest to work with undirected hypergraphs. Nevertheless, we preserve an important bit of additional structure from the chemical reaction networks, the fact that the vertex set of a hyperedge is partitioned into two classes. In the directed case, they correspond to inputs and outputs, but in the setting that we wish to adopt, we do not distinguish these two roles and simply keep the partitioning of the vertices of a hyperedge into two classes. Thus, we are working with hypergraphs with an additional piece of structure, the partitioning of the vertex sets of each hyperedge into two classes. We shall call these \emph{chemical hypergraphs}. 

	\begin{definition}
				A \emph{chemical hypergraph} is a pair $\Gamma=(V,H)$ such that $V=\{v_1,\ldots,v_N\}$ is a finite set of vertices and $H$ is a set such that every element $h$ in $H$ is a pair of elements $(V_h,W_h)$ (input and output, not necessarily disjoint) in $\mathcal{P}(V)\setminus\{\emptyset\}$. The elements of $H$ are called the \emph{oriented hyperedges}. Changing the orientation of a hyperedge $h$ means exchanging its input and output, leading to the pair $(W_h,V_h)$. 
			\end{definition}

					Since every chemical reaction has both educts and products, we  consider only hyperedges that have at least one input and at least one output. 
	\begin{definition}
					A \emph{catalyst} in a hyperedge $h$ is a vertex that is both an input and an output for $h$.
				\end{definition}
	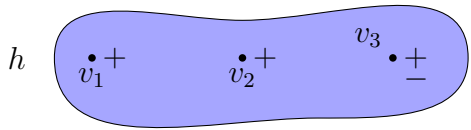
\begin{figure}[h]
		\begin{center}
			\begin{tikzpicture}
			\node (v1) at (1,0) {};
			\node (v2) at (3,0) {};
			\node (v3) at (5,0) {};
			
			\begin{scope}[fill opacity=0.5]
			\filldraw[fill=blue!70] ($(v1)+(-0.5,0)$) 
			to[out=90,in=180] ($(v2) + (0,0.5)$) 
			to[out=0,in=90] ($(v3) + (1,0)$)
			to[out=270,in=0] ($(v2) + (1,-0.8)$)
			to[out=180,in=270] ($(v1)+(-0.5,0)$);
			\end{scope}

			\fill (v1) circle (0.05) node [right] {$+$} node [below] {$v_1$};
			\fill (v2) circle (0.05) node [right] {$+$} node [below] {$v_2$};
			\fill (v3) circle (0.05) node [right] {$+$} node [below right] {$-$} node [above left] {$v_3$};
			
			\node at (0,0) {$h$};
			\end{tikzpicture}
		\end{center}
		\caption{An hyperedge $h$ that has two inputs and one catalyst.}\label{fighyperedgewcat}
	\end{figure}	
	\begin{rmk}
		The above definition comes from the fact that, in chemistry, a \emph{catalyst} is an element that participates in a reaction but is not changed by that reaction.
	\end{rmk}
	
	Our theory thus includes also \emph{oriented graphs with self-loops}, i.e. graphs that may have edges whose two endpoints coincide.\\		

While according to our definition, we shall not work with \emph{directed} hyperedges, we shall nevertheless have to work with \emph{oriented} hyperedges. 
Let us arbitrarily call the two orientations of a hyperedge $h$ $+$ and $-$. Analogously to differential forms in Riemannian geometry, see for instance \cite{JGeom}, we shall consider functions $\gamma$ from the set of oriented hyperedges that satisfy
\begin{equation}
  \label{or}
  \gamma (h,-)=-\gamma (h,+),
\end{equation}
that is, changing the orientation of $h$ produces a minus sign. Importantly, neither of the two orientations that a hyperedge carries plays a preferred role. Thus, an \emph{oriented} hyperedge should not be confused with a \emph{directed} hyperedge. 
		
				\begin{definition}
					We say that a hypergraph $\Gamma=(V,H)$ is \emph{connected} if, for every pair of vertices $v,w\in V$, there exists a \emph{path} that connects $v$ and $w$, i.e. there exist $v_1,\dots,v_m\in V$ and $h_1,\dots,h_{m-1}\in H$ such that:
					\begin{itemize}
						\item $v_1=v$;
						\item $v_m=w$;
						\item $\{v_i,v_{i+1}\}\subseteq h_i$ for each $i=1,\dots,m-1$.
					\end{itemize}
				\end{definition}
				\begin{figure}[h]
					\begin{center}
						\begin{tikzpicture}
						\node (v1) at (1,0) {};
						\node (v2) at (3,0) {};
						\node (v3) at (5,0) {};
						\node (v4) at (7,0) {};
						\node (v5) at (9,0) {};
						
						\begin{scope}[fill opacity=0.5]
						\filldraw[fill=red!70] ($(v1)+(-0.5,0)$) 
						to[out=90,in=180] ($(v2) + (0,0.5)$) 
						to[out=0,in=90] ($(v3) + (1,0)$)
						to[out=270,in=0] ($(v2) + (1,-0.8)$)
						to[out=180,in=270] ($(v1)+(-0.5,0)$);
							\filldraw[fill=blue!70] ($(v3)+(-0.5,0)$) 
							to[out=90,in=180] ($(v4) + (0,0.5)$) 
							to[out=0,in=90] ($(v5) + (1,0)$)
							to[out=270,in=0] ($(v4) + (1,-0.8)$)
							to[out=180,in=270] ($(v3)+(-0.5,0)$);
						\end{scope}

						\fill (v1) circle (0.05) node [below] {$v_1$};
						\fill (v2) circle (0.05) node [below] {$v_2$};
						\fill (v3) circle (0.05) node [below] {$v_3$};
						\fill (v4) circle (0.05) node [below] {$v_4$};
						\fill (v5) circle (0.05) node [below] {$v_5$};
						
						\node at (0,0) {\color{red} $h_1$};
						\node at (10.5,0) {\color{blue} $h_2$};
						\end{tikzpicture}
					\end{center}
					\caption{A \emph{connected} hypergraph.}\label{connhype}
				\end{figure}
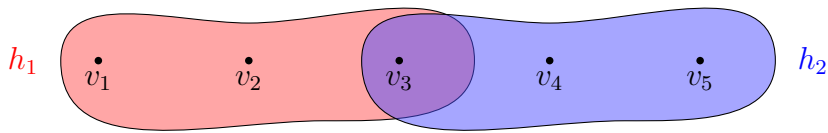
			\begin{definition}
				We say that $\Gamma=(V,H)$ has $k$ \emph{connected components} if there exist $\Gamma_1=(V_1,H_1),\dots,\Gamma_k=(V_k,H_k)$ such that:
				\begin{enumerate}
					\item For every $i\in\{1,\ldots,k\}$, $\Gamma_i$ is a connected hypergraph with $V_i\subseteq V$ and $H_i\subseteq H$;
					\item For every $i,j\in\{1,\ldots,k\}$, $i\neq j$, $V_i\cap V_j=\emptyset$ and therefore also $H_i\cap H_j=\emptyset$.
				\end{enumerate}
			\end{definition}

			\begin{definition} Let $\Gamma=(V,H)$ be a hypergraph. We say that $\mathcal{S}=(V',H')$ is a \emph{closed system of reactions} in $\Gamma$ if:
				\begin{enumerate}
					\item $\emptyset\neq H'\subseteq H$;
					\item $V'=\{v\in h:h\in H'\}$;
					\item Each $v\in V'$ appears in $\mathcal{S}$ as often as input as as output.
				\end{enumerate}
			\end{definition}
			\begin{rmk}
				Closed systems for hypergraphs generalize the \emph{oriented cycles} that we have for graphs, so they are interesting from the mathematical point of view, and they are also clearly interesting from the chemical point of view.
			\end{rmk}

			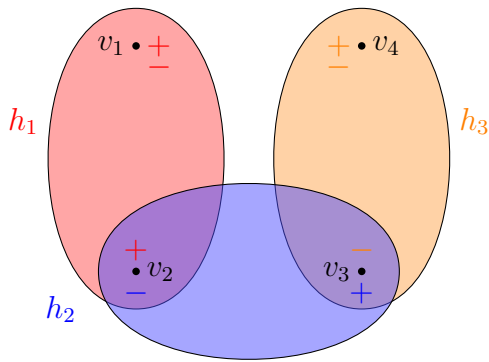
\begin{figure}[h]
				\begin{center}
					\begin{tikzpicture}
					\node (v2) at (1.5,0) {};
					\node (v1) at (1.5,3) {};
					\node (v3) at (4.5,0) {};
					\node (v4) at (4.5,3) {};
					
					\begin{scope}[fill opacity=0.5]
					\filldraw[fill=red!70] ($(v1)+(0,0.5)$) 
					to[out=0,in=0] ($(v2) + (0,-0.5)$)
					to[out=180,in=180] ($(v1)+(0,0.5)$);
					\filldraw[fill=orange!70] ($(v4)+(0,0.5)$) 
					to[out=0,in=0] ($(v3) + (0,-0.5)$)
					to[out=180,in=180] ($(v4)+(0,0.5)$);
					\filldraw[fill=blue!70] ($(v2)+(-0.5,0)$) 
					to[out=-270,in=-270] ($(v3) + (0.5,0)$)
					to[out=270,in=270] ($(v2)+(-0.5,0)$);
					\end{scope}
					
					\fill (v1) circle (0.05) node [left] {$v_1$} node [right] {\color{red}$+$} node [below right] {\color{red}$-$};
					\fill (v2) circle (0.05) node [right] {$v_2$} node [above] {\color{red}$+$} node [below] {\color{blue}$-$};
					\fill (v3) circle (0.05) node [left] {$v_3$} node [below] {\color{blue}$+$} node [above] {\color{orange}$-$};
					\fill (v4) circle (0.05) node [right] {$v_4$} node [left] {\color{orange}$+$} node [below left] {\color{orange}$-$};
					
					\node at (0,2) {\color{red}$h_1$};
					\node at (0.5,-0.5) {\color{blue}$h_2$};
					\node at (6,2) {\color{orange}$h_3$};
					\end{tikzpicture}
				\end{center}
				\caption{A closed system of reactions.}\label{closedsys}
			\end{figure}
				\begin{definition}
					We say that two closed systems $\mathcal{S}=(V',H')$ and $\mathcal{S}=(V',H')$ are \emph{disjoint} if $H\cap H'=\emptyset$.
				\end{definition}
				\begin{rmk}
					Disjoint systems don't have common hyperedges but they may have common vertices.
				\end{rmk}
\begin{definition}
	Let $\Gamma=(V,H)$ be a hypergraph with $M$ hyperedges $h_1,\ldots,h_M$ and $K$ closed systems of reactions $\mathcal{S}_1,\ldots,\mathcal{S}_K$. Let $A=(a_{ij})_{ij}$ be the $K\times M$ matrix such that
	\begin{equation*} 
	a_{ij}:=\begin{cases} 1 & \text{if }h_j\in\mathcal{S}_i\\ 0 & \text{otherwise.} \end{cases} 
	\end{equation*}Therefore each row $A_i$ of $A$ represents a closed system $\mathcal{S}_i$ and each column $A^j$ of $A$ represents a hyperedge $h_j$. Given $I\subseteq\{1,\ldots,K\}$, we say that the closed systems $\{\mathcal{S}_i\}_{i\in I}$ are \emph{linearly independent} if the raws $\{A_i\}_{i\in I}$ of $A$ are linearly independent.
\end{definition}
\begin{rmk}
Pairwise disjoint closed systems are linearly independent.
\end{rmk}

\section{Generalized Laplace Operators}\label{Section Generalized Laplace Operators}
	In order to define the Laplace operator for hypergraphs, we will generalize the construction of the Laplace operator for graphs in the most natural way. In particular, we will:\begin{enumerate}
		\item Give weight one to the hyperedges (as we do for edges in the case of graphs) and therefore give weight $\deg v:=\bigl| \text{hyperedges containing }v\bigr|$ to each vertex $v$;
		\item Define a scalar product for functions defined on hyperedges and a scalar product for functions defined on vertices, based on the weights we gave;
		\item Define the boundary operator for functions defined on the vertex set;
		\item Find the coboundary operator based on the scalar product we defined;
		\item Define the Laplace operators as the two different compositions of the boundary and the coboundary operator.
	\end{enumerate}
\begin{definition}[Scalar product for functions defined on hyperedges]
	Given $\omega,\gamma:H\rightarrow\mathbb{R}$, let
	\begin{equation*}
	(\omega,\gamma)_H:=\sum_{h\in H}\omega(h)\cdot\gamma(h).
	\end{equation*}
\end{definition}
\begin{definition}[Scalar product for functions defined on vertices]
	Given $f,g:V\rightarrow\mathbb{R}$, let
	\begin{equation*}
	(f,g)_V:=\sum_{v\in V}\deg v\cdot f(v)\cdot g(v).
	\end{equation*}
\end{definition}
\begin{definition}[Boundary operator for functions defined on vertices]
	Given $f:V\rightarrow\mathbb{R}$ and $h\in H$, let
	\begin{equation*}
\delta f(h):=\sum_{v_i \text{ input of }h}f(v_i)-\sum_{v^j \text{ output of }h}f(v^j).
	\end{equation*}\end{definition}
\begin{rmk}
	Note that
	\begin{equation*}
	\delta:\{f:V\rightarrow\mathbb{R}\}\longrightarrow\{\gamma:H\rightarrow\mathbb{R}\}
	\end{equation*}
where the $\gamma$ are always supposed to satisfy \eqref{or}. In particular, $\delta f$ also satisfies \eqref{or}. 
\end{rmk}

\begin{definition}[Adjoint operator of the boundary operator]
	Let 	\begin{equation*}
	\delta^*:\{\gamma:H\rightarrow\mathbb{R}\}\longrightarrow\{f:V\rightarrow\mathbb{R}\}
	\end{equation*}be defined as
		\begin{equation*}
		\delta^*(\gamma)(v):=\frac{\sum_{h_{\text{in}}: v\text{ input}}\gamma(h_{\text{in}})-\sum_{h_{\text{out}}: v\text{ output}}\gamma(h_{\text{out}})}{\deg v}.
		\end{equation*}
\end{definition}
\begin{lem}$\delta^*$ is such that $(\delta f,\gamma)_H=(f,\delta^*\gamma)_V$, therefore it is the (unique) adjoint operator of $\delta$.
\end{lem}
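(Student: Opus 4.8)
The plan is to verify the adjoint identity directly by expanding both sides of $(\delta f,\gamma)_H=(f,\delta^*\gamma)_V$ as sums and checking that they agree term by term, then invoke the standard fact that the adjoint of a linear map between finite-dimensional inner product spaces is unique. Concretely, I would start from the left-hand side and write
\[
(\delta f,\gamma)_H=\sum_{h\in H}\delta f(h)\,\gamma(h)=\sum_{h\in H}\Biggl(\sum_{v_i\text{ input of }h}f(v_i)-\sum_{v^j\text{ output of }h}f(v^j)\Biggr)\gamma(h).
\]
The key step is to exchange the order of summation, passing from a sum over hyperedges $h$ (with an inner sum over the vertices incident to $h$) to a sum over vertices $v$ (with an inner sum over the hyperedges containing $v$). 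Doing this carefully with the input/output bookkeeping turns the double sum into
\[
\sum_{v\in V}\Biggl(\sum_{h_{\text{in}}:\,v\text{ input}}\gamma(h_{\text{in}})-\sum_{h_{\text{out}}:\,v\text{ output}}\gamma(h_{\text{out}})\Biggr)f(v).
\]
Then I would multiply and divide each summand by $\deg v$ (which is legitimate since $\deg v\geq 1$ for every $v$ that appears in some hyperedge), recognizing the bracketed expression divided by $\deg v$ as exactly $\delta^*\gamma(v)$ by definition, so the sum becomes $\sum_{v\in V}\deg v\cdot f(v)\cdot\delta^*\gamma(v)=(f,\delta^*\gamma)_V$, as required.

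Two small points deserve attention. First, one should check that $(\cdot,\cdot)_V$ and $(\cdot,\cdot)_H$ really are inner products (positive definite) on the relevant spaces; for $(\cdot,\cdot)_H$ this requires restricting to the space of functions satisfying the orientation condition \eqref{or}, i.e. fixing one orientation per hyperedge and identifying $\gamma$ with the vector of its values on that chosen orientation, and for $(\cdot,\cdot)_V$ it uses $\deg v>0$. Second, since the orientation condition makes $\delta f(h)$ and $\gamma(h)$ each flip sign under reorientation of $h$, the product $\delta f(h)\gamma(h)$ is orientation-independent and the sum $\sum_{h\in H}$ is well defined; the same remark applies to the definition of $\delta^*$. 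Once the defining identity $(\delta f,\gamma)_H=(f,\delta^*\gamma)_V$ holds for all $f$ and all $\gamma$, uniqueness is automatic: if $\delta^*_1$ and $\delta^*_2$ both satisfy it, then $(f,(\delta^*_1-\delta^*_2)\gamma)_V=0$ for all $f$, and non-degeneracy of $(\cdot,\cdot)_V$ forces $\delta^*_1\gamma=\delta^*_2\gamma$ for every $\gamma$.

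The main obstacle — though it is more bookkeeping than genuine difficulty — is handling catalysts and repeated incidences correctly when swapping the summation order. A vertex $v$ that is a catalyst in $h$ contributes both $+f(v)$ and $-f(v)$ to $\delta f(h)$, so it contributes $0$ there; correspondingly, in the reorganized sum over vertices, $h$ must be counted once in the "$v$ input" sum and once in the "$v$ output" sum, so its net contribution is again $0$. As long as the counting conventions for "$v$ input of $h$" and "$v$ output of $h$" are applied consistently on both sides of the identity (and consistently with how $\deg v$ counts hyperedges containing $v$), the Fubini-type exchange goes through without any sign errors, and the lemma follows.
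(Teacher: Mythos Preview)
Your proposal is correct and follows essentially the same approach as the paper: expand $(\delta f,\gamma)_H$ as a double sum, exchange the order of summation to pass from hyperedges to vertices, multiply and divide by $\deg v$ to recognize $\delta^*\gamma(v)$, and read off $(f,\delta^*\gamma)_V$. You add some extra justification (orientation-independence, catalyst bookkeeping, and the uniqueness argument via non-degeneracy of $(\cdot,\cdot)_V$) that the paper leaves implicit, but the argument is otherwise identical.
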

\begin{proof}
\begin{align*}
(\delta f,\gamma)_H&=\sum_{h\in H}\gamma(h)\cdot\biggl(\sum_{v_i \text{ input of }h}f(v_i)-\sum_{v^j \text{ output of }h}f(v^j)\biggr)\\
&=\sum_{v\in V}f(v)\cdot\biggl(\sum_{h_{\text{in}}: v\text{ input}}\gamma(h_{\text{in}})-\sum_{h_{\text{out}}: v\text{ output}}\gamma(h_{\text{out}})\biggr)\\
&=\sum_{v\in V}\deg v\cdot f(v)\cdot\frac{\biggl(\sum_{h_{\text{in}}: v\text{ input}}\gamma(h_{\text{in}})-\sum_{h_{\text{out}}: v\text{ output}}\gamma(h_{\text{out}})\biggr)}{\deg v}\\
&=\sum_{v\in V}\deg v\cdot f(v)\cdot\delta^*(\gamma)(v)\\
&=(f,\delta^*\gamma)_V.
\end{align*}
\end{proof}
\begin{definition}[Laplace operators]
	Given $f:V\rightarrow\mathbb{R}$ and given $v\in V$, let
\begin{align*}
L^Vf(v):=&\delta^*(\delta f)(v)\\
=&\frac{\sum_{h_{\text{in}}: v\text{ input}}\delta f(h_{\text{in}})-\sum_{h_{\text{out}}: v\text{ output}}\delta f(h_{\text{out}})}{\deg v}\\
=&\frac{\sum_{h_{\text{in}}: v\text{ input}}\biggl(\sum_{v' \text{ input of }h_{\text{in}}}f(v')-\sum_{w' \text{ output of }h_{\text{in}}}f(w')\biggr)}{\deg v}+\\
&-\frac{\sum_{h_{\text{out}}: v\text{ output}}\biggl(\sum_{\hat{v} \text{ input of }h_{\text{out}}}f(\hat{v})-\sum_{\hat{w} \text{ output of }h_{\text{out}}}f(\hat{w})\biggr)}{\deg v}.
\end{align*}Analogously, given $\gamma:H\rightarrow\mathbb{R}$ and $h\in H$, let
\begin{align*}
L^H\gamma(h):=&\delta(\delta^* \gamma)(h)\\
=&\sum_{v_i \text{ input of }h}\delta^* \gamma(v_i)-\sum_{v^j \text{ output of }h}\delta^* \gamma(v^j)\\
=&\sum_{v_i \text{ input of }h}\frac{\sum_{h_{\text{in}}: v_i\text{ input}}\gamma(h_{\text{in}})-\sum_{h_{\text{out}}: v_i\text{ output}}\gamma(h_{\text{out}})}{\deg v_i}+\\
&-\sum_{v^j \text{ output of }h}\frac{\sum_{h'_{\text{in}}: v^j\text{ input}}\gamma(h'_{\text{in}})-\sum_{h'_{\text{out}}: v^j\text{ output}}\gamma(h'_{\text{out}})}{\deg v^j}.
\end{align*}
\end{definition}
\begin{prop}
Let $\Gamma$ be a graph with vertex set $V$ and edge set $E$, with the  convention for orientations as introduced above for hypergraphs. Then
\begin{equation*}
L^Vf(v)=f(v)-\frac{1}{\deg v}\sum_{v\rightarrow w}f(w),
\end{equation*}which is exactly the Laplace operator of graphs for functions defined on vertices.\newline
 Analogously, if $\gamma:E\rightarrow\mathbb{R}$ is such that $\gamma(-e)=-\gamma(e)$ and $e=[v_0,v_1]$,
 \begin{equation*}
 	L^H\gamma(e)=\frac{1}{\deg v_0}\cdot \sum_{v_o\in f=[v_0,w]}\gamma(f)-\frac{1}{\deg v_1}\cdot \sum_{v_1\in g=[v_1,w]}\gamma(g),
 	\end{equation*}which is equal to $L^1$ for graphs. 
\end{prop}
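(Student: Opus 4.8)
The plan is to specialize the two general formulas in the Laplace operator definition to the case where every hyperedge $h$ is an ordinary oriented edge, i.e.\ $h=[v_0,v_1]$ with a single input $v_0$ and a single output $v_1$, and then recognize the resulting expressions as the classical operators $L^0$ and $L^1$ for graphs. No deep idea is needed; this is a verification, so the main work is careful bookkeeping of the input/output sums.

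First I would treat $L^V$. Starting from
\begin{equation*}
L^Vf(v)=\frac{\sum_{h_{\text{in}}: v\text{ input}}\delta f(h_{\text{in}})-\sum_{h_{\text{out}}: v\text{ output}}\delta f(h_{\text{out}})}{\deg v},
\end{equation*}
I observe that for an edge $e=[v_0,v_1]$ one has $\delta f(e)=f(v_0)-f(v_1)$. Summing over the edges in which $v$ is the input contributes, for each such edge $e=[v,w]$, the term $f(v)-f(w)$; summing over the edges in which $v$ is the output contributes, for each $e=[w,v]$, the term $f(w)-f(v)$, which after the overall minus sign becomes $f(v)-f(w)$. Hence every edge incident to $v$ contributes exactly $f(v)-f(w)$ where $w$ is the other endpoint, regardless of orientation; there are $\deg v$ such edges, so the numerator equals $\deg v\cdot f(v)-\sum_{v\sim w}f(w)$, and dividing by $\deg v$ gives $f(v)-\frac{1}{\deg v}\sum_{v\to w}f(w)$ as claimed (the notation $\sum_{v\to w}$ here being understood as the sum over all neighbours). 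This matches $L^0$ for graphs.

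Next I would treat $L^H$ on a function $\gamma$ with $\gamma(-e)=-\gamma(e)$. For $e=[v_0,v_1]$ the general formula reduces to $L^H\gamma(e)=\delta^*\gamma(v_0)-\delta^*\gamma(v_1)$ since $v_0$ is the only input and $v_1$ the only output. Then I expand $\delta^*\gamma(v_0)=\frac{1}{\deg v_0}\bigl(\sum_{f:\,v_0\text{ input}}\gamma(f)-\sum_{f:\,v_0\text{ output}}\gamma(f)\bigr)$; using the orientation-reversal rule \eqref{or} to flip the sign on the edges where $v_0$ is the output, both sums combine into a single sum over all edges $f=[v_0,w]$ incident to $v_0$ oriented outward from $v_0$, giving $\frac{1}{\deg v_0}\sum_{v_0\in f=[v_0,w]}\gamma(f)$. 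Doing the same at $v_1$ and subtracting yields exactly the stated expression, which is the standard $L^1$.

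The only subtlety — and the place where a reader must pay attention — is the sign convention coming from \eqref{or}: one has to be consistent about which orientation of each incident edge one evaluates $\gamma$ at, and check that the minus signs in the definition of $\delta^*$ (separating the ``$v$ input'' and ``$v$ output'' sums) cooperate with the minus sign $\gamma(-e)=-\gamma(e)$ so that all incident edges end up counted with a coherent orientation. Once that is pinned down the computation is immediate. I would therefore present the argument as two short displayed computations, one for $L^V$ and one for $L^H$, with a sentence each explaining the orientation bookkeeping, and conclude by citing the known formulas for $L^0$ and $L^1$ on graphs.
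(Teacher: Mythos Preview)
Your proposal is correct and follows essentially the same approach as the paper: a direct specialization of the general hypergraph formulas to the case where each hyperedge has exactly one input and one output, followed by sign bookkeeping using \eqref{or}. Your write-up is slightly more streamlined in that you work with $\delta f(e)=f(v_0)-f(v_1)$ immediately rather than carrying the full double sum through several displayed lines, but the logic is identical.
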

\begin{proof}
Every oriented edge has exactly one input and exactly one output. Therefore, if $h_{\text{in}}$ is an edge with input $v$, then \begin{equation*}
\{v':v' \text{ input of }h_{\text{in}}\}=\{v\}
\end{equation*}and
\begin{equation*}
\bigl|\{w':w' \text{ output of }h_{\text{in}}\}\bigr|=1.
\end{equation*}
If $h_{\text{out}}$ is an edge with output $v$, then \begin{equation*}
\{\hat{w}:\hat{w} \text{ output of }h_{\text{out}}\}=\{v\}
\end{equation*}and
\begin{equation*}
\big|\{\hat{v}:\hat{v} \text{ input of }h_{\text{out}}\}\big|=1.
\end{equation*}
Therefore,
\begin{align*}
L^Vf(v)=&\frac{\sum_{h_{\text{in}}: v\text{ input}}\biggl(\sum_{v' \text{ input of }h_{\text{in}}}f(v')-\sum_{w' \text{ output of }h_{\text{in}}}f(w')\biggr)}{\deg v}+\\
&-\frac{\sum_{h_{\text{out}}: v\text{ output}}\biggl(\sum_{\hat{v} \text{ input of }h_{\text{out}}}f(\hat{v})-\sum_{\hat{w} \text{ output of }h_{\text{out}}}f(\hat{w})\biggr)}{\deg v}\\
=&\frac{\sum_{h_{\text{in}}: v\text{ input}}\biggl(f(v)-\sum_{w' \text{ output of }h_{\text{in}}}f(w')\biggr)}{\deg v}+\\
&-\frac{\sum_{h_{\text{out}}: v\text{ output}}\biggl(\sum_{\hat{v} \text{ input of }h_{\text{out}}}f(\hat{v})-f(v)\biggr)}{\deg v}\\
=&\frac{f(v)}{\deg v}\cdot\biggl(\big|h_{\text{in}}: v\text{ input}\big|+\big|h_{\text{out}}: v\text{ output}\big|\biggr)+\\
&-\frac{1}{\deg v}\cdot\biggl(\sum_{h_{\text{in}: v \text{ input, }w' \text{ output}}}f(w')+\sum_{h_{\text{out}: \hat{v} \text{ input, }v \text{ output}}}f(\hat{v})\biggr)\\
=&f(v)-\frac{1}{\deg v}\cdot\sum_{v\rightarrow w}f(w),
\end{align*}where the last equality is due to the properties of orientation for graphs. Analogously, if $e=[v_0,v_1]$, then
\begin{align*}
L^H\gamma(e)=&\sum_{v_i \text{ input of }e}\frac{\sum_{h_{\text{in}}: v_i\text{ input}}\gamma(h_{\text{in}})-\sum_{h_{\text{out}}: v_i\text{ output}}\gamma(h_{\text{out}})}{\deg v_i}+\\
&-\sum_{v^j \text{ output of }e}\frac{\sum_{h'_{\text{in}}: v^j\text{ input}}\gamma(h'_{\text{in}})-\sum_{h'_{\text{out}}: v^j\text{ output}}\gamma(h'_{\text{out}})}{\deg v^j}\\
=&\frac{\sum_{h_{\text{in}}: v_0\text{ input}}\gamma(h_{\text{in}})-\sum_{h_{\text{out}}: v_0\text{ output}}\gamma(h_{\text{out}})}{\deg v_0}+\\
&-\frac{\sum_{h'_{\text{in}}: v_1\text{ input}}\gamma(h'_{\text{in}})-\sum_{h'_{\text{out}}: v_1\text{ output}}\gamma(h'_{\text{out}})}{\deg v_1}\\
=&\frac{1}{\deg v_0}\cdot \sum_{v_o\in f=[v_0,w]}\gamma(f)-\frac{1}{\deg v_1}\cdot \sum_{v_1\in g=[v_1,w]}\gamma(g),
\end{align*}where the last equality is due to the fact that $-\gamma(h_{\text{out}})=\gamma(-h_{\text{out}})$ and $-\gamma(h'_{\text{out}})=\gamma(-h'_{\text{out}})$.
\end{proof}
\begin{rmk}
$L^H\gamma(h)$ counts what flows out at the inputs - what flows in at the inputs - what flows out at the outputs + what flows in at the outputs.
\end{rmk}
\section{First properties}\label{Section First properties}
\begin{lem}\label{lem1}
$L^V$ and $L^H$ are both self-adjoint.
\end{lem}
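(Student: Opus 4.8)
The plan is to exploit the factorizations $L^V=\delta^*\circ\delta$ and $L^H=\delta\circ\delta^*$ together with the adjointness relation $(\delta f,\gamma)_H=(f,\delta^*\gamma)_V$ proved in the previous lemma. Both function spaces in play — $\{f:V\to\mathbb{R}\}$ with $(\cdot,\cdot)_V$, and the space of $\gamma:H\to\mathbb{R}$ satisfying \eqref{or} with $(\cdot,\cdot)_H$ — are finite-dimensional real inner product spaces (the latter being $M$-dimensional, since a function on oriented hyperedges subject to \eqref{or} is determined by its values on one chosen orientation per hyperedge, and $(\omega,\gamma)_H=\sum_{h\in H}\omega(h)\gamma(h)$ is independent of those choices and positive definite). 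Hence $\delta$ admits a unique adjoint, and passing to adjoints is an involution, so $(\delta^*)^*=\delta$; equivalently, the relation $(\delta f,\gamma)_H=(f,\delta^*\gamma)_V$ may be read in both directions.

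The first step is to verify self-adjointness of $L^V$. For $f,g:V\to\mathbb{R}$ one computes
\begin{equation*}
(L^Vf,g)_V=(\delta^*\delta f,g)_V=(\delta f,\delta g)_H=(f,\delta^*\delta g)_V=(f,L^Vg)_V,
\end{equation*}
where the second equality applies the adjointness relation with $\delta f$ in the role of $\gamma$, and the third applies it again with $\delta g$ in the role of $\gamma$. The second step is the symmetric computation for $L^H$: for $\omega,\gamma:H\to\mathbb{R}$ satisfying \eqref{or},
\begin{equation*}
(L^H\omega,\gamma)_H=(\delta\delta^*\omega,\gamma)_H=(\delta^*\omega,\delta^*\gamma)_V=(\omega,\delta\delta^*\gamma)_H=(\omega,L^H\gamma)_H,
\end{equation*}
using the relation $(\delta^*\eta,f)_V=(\eta,\delta f)_H$ (the same identity read with the two sides interchanged) twice. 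This gives both claims.

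There is no real obstacle here: the content of the lemma is entirely contained in the already-established adjointness of $\delta^*$, and the argument is the standard fact that $T^*T$ and $TT^*$ are self-adjoint. The only point requiring a word of care — not a difficulty — is bookkeeping of the orientation constraint \eqref{or}: one should observe that $\delta$ takes values in the space of functions satisfying \eqref{or} (as noted in the remark after the definition of $\delta$) and that $\delta^*$ is defined precisely on that space, so that the two compositions are well defined on the correct spaces and $(\cdot,\cdot)_H$ is a genuine scalar product there. Once this is recorded, the displayed chains above complete the proof with no computation.
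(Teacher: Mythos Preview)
Your proof is correct and follows exactly the paper's approach: the paper's one-line proof simply says to use that $L^V$ and $L^H$ are the two compositions of $\delta$ and $\delta^*$, which are adjoint to each other, and you have spelled out that standard argument in full.
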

\begin{proof}
Use the fact that $L^V$ and $L^H$ are the two compositions of $\delta$ and $\delta^*$, which are adjoint to each other.
\end{proof}
\begin{lem}\label{lem2}
$L^V$ and $L^H$ are non-negative operators.
\end{lem}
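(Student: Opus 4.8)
The plan is to exploit the factorizations $L^V=\delta^*\delta$ and $L^H=\delta\delta^*$ together with the adjointness relation $(\delta f,\gamma)_H=(f,\delta^*\gamma)_V$ proved above. Recall that a self-adjoint operator $T$ on a finite-dimensional real inner product space is non-negative precisely when $(Tx,x)\ge 0$ for every $x$; by Lemma \ref{lem1} both $L^V$ and $L^H$ are self-adjoint, so it suffices to check this single inequality in each case.

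First I would compute, for an arbitrary $f:V\rightarrow\mathbb{R}$,
\[
(L^Vf,f)_V=(\delta^*\delta f,f)_V=(\delta f,\delta f)_H=\sum_{h\in H}\bigl(\delta f(h)\bigr)^2\ge 0,
\]
where the middle equality is exactly the adjointness lemma applied with $\gamma=\delta f$. This gives non-negativity of $L^V$. Symmetrically, for an arbitrary $\gamma:H\rightarrow\mathbb{R}$ satisfying \eqref{or}, I would write
\[
(L^H\gamma,\gamma)_H=(\delta\delta^*\gamma,\gamma)_H=(\delta^*\gamma,\delta^*\gamma)_V=\sum_{v\in V}\deg v\cdot\bigl(\delta^*\gamma(v)\bigr)^2\ge 0,
\]
using the adjointness lemma in the form $(\delta\eta,\gamma)_H=(\eta,\delta^*\gamma)_V$ with $\eta=\delta^*\gamma$, and noting that $\deg v\ge 1$ for every vertex that occurs in some hyperedge, so each summand is non-negative.

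There is essentially no real obstacle here; the only points requiring a moment of care are bookkeeping ones. I would make sure to state explicitly that we are working on the right inner product spaces — functions on $V$ equipped with the degree-weighted scalar product $(\cdot,\cdot)_V$, and functions on $H$ respecting the orientation convention \eqref{or} equipped with $(\cdot,\cdot)_H$ — and that the phrase "non-negative operator" is understood in the self-adjoint sense, which is legitimate by Lemma \ref{lem1}. Both observations are immediate from the definitions, so the proof is a two-line application of $T=S^*S\Rightarrow (Tx,x)=\|Sx\|^2\ge 0$.
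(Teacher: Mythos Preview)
Your proof is correct and follows exactly the same approach as the paper: both use the factorizations $L^V=\delta^*\delta$ and $L^H=\delta\delta^*$ together with the adjointness relation to reduce $(L^Vf,f)_V$ and $(L^H\gamma,\gamma)_H$ to the squared norms $(\delta f,\delta f)_H$ and $(\delta^*\gamma,\delta^*\gamma)_V$, respectively. The only difference is that you spell out the resulting sums explicitly and add some commentary, whereas the paper stops at the inner-product expressions.
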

\begin{proof}
Let $f:V\rightarrow\mathbb{R}$. Then
\begin{equation}\label{pos1}
(L^Vf,f)_V=(\delta^*\delta f,f)_V=(\delta f,\delta f)_H\geq 0.
\end{equation}Analogously, for $\gamma:H\rightarrow\mathbb{R}$,
\begin{equation}\label{pos2}
(L^H\gamma,\gamma)_H=(\delta\delta^* \gamma,\gamma)_H=(\delta^* \gamma,\delta^*\gamma)_V\geq 0.
\end{equation}
\end{proof}
A direct consequence of Lemmas \ref{lem1} and \ref{lem2} is 
	\begin{cor}\label{cor1}
		The eigenvalues of $L^V$ and $L^H$ are real and non-negative.
	\end{cor}

	\begin{notation}Let $N:=\big|V\big|$ and let $M:=\big|H\big|$. Since the space of real functions on a set with cardinality $k$ is $k$-dimensional, an operator on this space has precisely $k$ eigenvalues, counted with their multiplicities. Therefore $L^V$ has $N$ eigenvalues that we will arrange as
		\begin{equation*}
		\mu_1\geq\ldots\geq\mu_N.
		\end{equation*}Analogously, $L^H$ has $M$ eigenvalues that we will arrange as
			\begin{equation*}
					\mu_1^H\geq\ldots\geq\mu^H_M.
			\end{equation*}
	\end{notation}

	\begin{lem}
		If $A$ and $B$ are linear operators, then the non-zero eigenvalues of $AB$ and $BA$ are the same.
	\end{lem}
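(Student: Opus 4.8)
The plan is to prove the stronger statement that $AB$ and $BA$ have the same \emph{nonzero} eigenvalues \emph{with the same multiplicities}, from which the claim follows immediately. The key observation is that an eigenvector for a nonzero eigenvalue transports between the two products under $A$ (or $B$). Concretely, suppose $\lambda\neq 0$ and $v\neq 0$ satisfies $ABv=\lambda v$. First I would set $w:=Bv$ and note that $w\neq 0$: if $w=0$ then $ABv=A\cdot 0=0=\lambda v$, forcing $v=0$ since $\lambda\neq 0$, a contradiction. Then
\begin{equation*}
BAw=BA(Bv)=B(ABv)=B(\lambda v)=\lambda Bv=\lambda w,
\end{equation*}
so $w$ is an eigenvector of $BA$ with the same eigenvalue $\lambda$. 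By symmetry (exchanging the roles of $A$ and $B$), every nonzero eigenvalue of $BA$ is also a nonzero eigenvalue of $AB$, so the two sets coincide.

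To upgrade this to equality of multiplicities, I would fix a nonzero $\lambda$ and show that the map $v\mapsto Bv$ restricts to a linear isomorphism from the eigenspace $E_\lambda(AB)=\ker(AB-\lambda I)$ onto $E_\lambda(BA)=\ker(BA-\lambda I)$. The computation above shows $B$ maps $E_\lambda(AB)$ into $E_\lambda(BA)$; symmetrically $A$ maps $E_\lambda(BA)$ into $E_\lambda(AB)$. It remains to check these are mutually inverse up to the scalar $\lambda$: for $v\in E_\lambda(AB)$ we have $A(Bv)=ABv=\lambda v$, so $\tfrac{1}{\lambda}A\circ B$ is the identity on $E_\lambda(AB)$, and likewise $\tfrac{1}{\lambda}B\circ A$ is the identity on $E_\lambda(BA)$. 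Hence $B|_{E_\lambda(AB)}$ is a bijection onto $E_\lambda(BA)$ with inverse $\tfrac{1}{\lambda}A|_{E_\lambda(BA)}$, and in particular $\dim E_\lambda(AB)=\dim E_\lambda(BA)$.

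The only mild subtlety — and the one point worth stating carefully rather than a genuine obstacle — is the distinction between geometric and algebraic multiplicities: the argument above handles geometric multiplicities directly, and for the application in this paper (where $L^V=\delta^*\delta$ and $L^H=\delta\delta^*$ are self-adjoint, so all multiplicities are geometric and the operators are diagonalizable) that is all that is needed. If one wants algebraic multiplicities in general, I would instead argue via characteristic polynomials, using the standard identity $\det(\lambda I - AB)=\lambda^{\,n-m}\det(\lambda I_m - BA)$ for $A$ an $m\times n$ and $B$ an $n\times m$ matrix (proved by the block-matrix trick comparing $\begin{pmatrix}\lambda I_m & A\\ B & I_n\end{pmatrix}$ computed two ways), which shows the characteristic polynomials of $AB$ and $BA$ agree up to a power of $\lambda$ and hence have identical nonzero roots with identical multiplicities. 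Since the excerpt only applies this lemma to the self-adjoint Laplacians $L^V$ and $L^H$, I would present the short eigenvector-transport proof as the main argument.
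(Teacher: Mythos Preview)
Your proof is correct and uses the same eigenvector-transport idea as the paper: the paper's argument is exactly your computation $BA(Bv)=B(ABv)=\mu Bv$, only stated more tersely (it does not explicitly verify $Bv\neq 0$ or spell out the symmetric direction). Your additional isomorphism of eigenspaces showing equality of geometric multiplicities goes beyond what the paper proves in the lemma, but is in fact what is tacitly used in the subsequent corollaries, so it is a welcome strengthening rather than a different route.
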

	\begin{proof}Let $\mu$ be a non-zero eigenvalue of $AB$ for a non-zero eigenvector $v$. Then
		\begin{equation*}
		\mu Bv=B\mu v=B(ABv)=(BA)Bv.
		\end{equation*}Therefore, $\mu$ is an eigenvalue of $BA$ for the eigenvector $Bv$.
	\end{proof}
	\begin{cor}\label{cor2}
		The non-zero eigenvalues of $L^V$ and $L^H$ are the same. In particular, if $f$ is an eigenfunction of $L^V$ with eigenvalue $\mu \neq 0$, then $\delta f$ is an eigenfunction of $L^H$ with eigenvalue $\mu$; if $\gamma$ is an eigenfunction of $L^H$ with eigenvalue $\mu'\neq 0$, then $\delta^* \gamma$ is an eigenfunction of $L^V$ with eigenvalue $\mu'$.
	\end{cor}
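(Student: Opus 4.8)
The plan is to deduce this directly from the preceding lemma, applied to the pair of operators $A=\delta^*$ and $B=\delta$. By the definition of the Laplace operators we have $L^V=\delta^*\delta=AB$ and $L^H=\delta\delta^*=BA$, so the lemma immediately yields that the non-zero eigenvalues of $L^V$ and $L^H$ coincide.

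For the explicit correspondence of eigenfunctions I would just unwind the computation in the proof of the lemma. Suppose $L^Vf=\mu f$ with $\mu\neq 0$ and $f\neq 0$. Then
\begin{equation*}
L^H(\delta f)=\delta\delta^*(\delta f)=\delta\bigl(\delta^*\delta f\bigr)=\delta(L^Vf)=\delta(\mu f)=\mu\,\delta f,
\end{equation*}
so $\delta f$ is an eigenfunction of $L^H$ with eigenvalue $\mu$, provided $\delta f\neq 0$. This is exactly where the hypothesis $\mu\neq 0$ enters: if $\delta f$ were the zero function, then $\mu f=L^Vf=\delta^*(\delta f)=0$, which forces $f=0$ since $\mu\neq 0$, contradicting that $f$ is an eigenfunction. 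Hence $\delta f\neq 0$.

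The reverse direction is completely symmetric: if $L^H\gamma=\mu'\gamma$ with $\mu'\neq 0$ and $\gamma\neq 0$, then
\begin{equation*}
L^V(\delta^*\gamma)=\delta^*\delta(\delta^*\gamma)=\delta^*\bigl(\delta\delta^*\gamma\bigr)=\delta^*(L^H\gamma)=\mu'\,\delta^*\gamma,
\end{equation*}
and $\delta^*\gamma\neq 0$ because $\delta^*\gamma=0$ would give $\mu'\gamma=L^H\gamma=\delta(\delta^*\gamma)=0$ and hence $\gamma=0$. Incidentally, this argument reproves the equality of the non-zero spectra from scratch, without invoking the lemma, since on the $\mu$-eigenspace of $L^V$ the composition $\delta^*\circ\delta$ acts as multiplication by $\mu$, so $f\mapsto\delta f$ and $\gamma\mapsto\delta^*\gamma$ are mutually inverse up to that scalar and restrict to bijections between the corresponding eigenspaces.

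I do not anticipate any real obstacle here: the only point requiring a moment's care is the non-vanishing of $\delta f$ (respectively $\delta^*\gamma$), which is precisely what the assumption that the eigenvalue be non-zero guarantees; everything else is a one-line substitution using $L^V=\delta^*\delta$ and $L^H=\delta\delta^*$ together with associativity of composition.
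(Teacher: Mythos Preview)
Your proposal is correct and follows exactly the paper's approach: the corollary is stated immediately after the lemma on $AB$ and $BA$ and is meant to be its direct application with $A=\delta^*$, $B=\delta$. You have simply made explicit the verification that $\delta f$ (resp.\ $\delta^*\gamma$) is nonzero, which the paper leaves implicit in the lemma's proof.
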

This corollary is quite important because it offers us two alternative ways to control or estimate the nonvanishing eigenvalues, either through $L^V$ or through $L^H$. In particular, we shall see in Section \ref{Section Applications of the Min-max Principle} below that these eigenvalues can therefore be expressed in two different ways by Rayleigh quotients. \\

As another important consequence of Cor. \ref{cor2}, the two operators only differ in the multiplicity of the eigenvalue $0$. 
Let $m_V$ and $m_H$ be the multiplicity of the eigenvalue $0$ of $L^V$ and $L^H$, resp. Then Cor. \ref{cor2} implies
\begin{cor}\label{cor3}
  \begin{equation}
    \label{mult}
    m_V-m_H =|V|-|H|.
  \end{equation}
\end{cor}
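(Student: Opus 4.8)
The plan is to deduce \eqref{mult} purely from linear-algebra facts already established, with no further reference to the hypergraph structure. First I would recall the setup: $L^V=\delta^*\delta$ acts on the $N$-dimensional space of functions on $V$, and $L^H=\delta\delta^*$ acts on the $M$-dimensional space of functions on $H$ (functions satisfying the orientation constraint \eqref{or}), where $N=|V|$ and $M=|H|$. By Corollary \ref{cor2}, the non-zero eigenvalues of $L^V$ and $L^H$ coincide, and moreover the correspondences $f\mapsto\delta f$ and $\gamma\mapsto\delta^*\gamma$ are mutually inverse (up to the eigenvalue factor) bijections between the corresponding non-zero eigenspaces, so the non-zero spectra agree \emph{with multiplicities}.

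Next I would count dimensions on both sides. Since $L^V$ is self-adjoint (Lemma \ref{lem1}), its eigenspaces span the whole $N$-dimensional space, so
\begin{equation*}
m_V+\bigl(\text{sum of multiplicities of non-zero eigenvalues of }L^V\bigr)=N.
\end{equation*}
Likewise, since $L^H$ is self-adjoint,
\begin{equation*}
m_H+\bigl(\text{sum of multiplicities of non-zero eigenvalues of }L^H\bigr)=M.
\end{equation*}
By the multiplicity-preserving matching of non-zero eigenvalues just described, the two parenthesized sums are equal. Subtracting the second equation from the first therefore gives $m_V-m_H=N-M=|V|-|H|$, which is exactly \eqref{mult}.

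I do not expect any real obstacle here; the statement is essentially the rank–nullity bookkeeping that accompanies the identity $\operatorname{rank}(\delta^*\delta)=\operatorname{rank}(\delta\delta^*)=\operatorname{rank}(\delta)$. The only point that deserves a sentence of care is the claim that the non-zero eigenvalues match \emph{with multiplicities} and not merely as sets: this is why I would invoke the explicit eigenfunction correspondence in Corollary \ref{cor2} (if $f_1,\dots,f_r$ is a basis of the $\mu$-eigenspace of $L^V$ with $\mu\neq0$, then $\delta f_1,\dots,\delta f_r$ are linearly independent, since $\delta^*\delta f_i=\mu f_i$ recovers the $f_i$, so they span an $r$-dimensional subspace of the $\mu$-eigenspace of $L^H$, and symmetrically). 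Once that is noted, the dimension count closes immediately.
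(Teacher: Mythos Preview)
Your proposal is correct and is exactly the argument the paper has in mind: the paper simply states that Cor.~\ref{cor2} implies \eqref{mult} without writing out any details, and what you have written is precisely the dimension count (non-zero eigenvalues of $L^V$ and $L^H$ agree with multiplicities, so $N-m_V=M-m_H$) that makes this implication explicit. Your extra sentence verifying that the multiplicities, not just the eigenvalue sets, match is a welcome clarification of a point the paper leaves implicit.
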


\section{The eigenvalue $0$}\label{Section The eigenvalue 0}
In this section, we want to control the multiplicity of the eigenvalue $0$ for our two Laplacians. They are related by Cor. \ref{cor3}. In order to see the principle, let us start with the simple situation where we only have a set $V$ of vertices, but no (hyper)edges connecting them. Then \eqref{mult} tells us that $m_V=|V|$, which of course can be trivially verified. Now let us add edges. As long as these edges do not form cycles, that is, as long as the graph is a forest, i.e., a collection of trees, we have $m_H=0$, and therefore, each new edge reduces the number of components as well as $m_V=|V|-|H|$ by $1$. When, however, a new edge closes a cycle, then $m_H$ increases by $1$, and consequently, $m_V$ is left unchanged. A special case of this is when we add a loop to a vertex. A loop induces a new eigenvalue $0$ of $L^H$ and thus lets $m_V$ unchanged. The general formula says that $m_V-m_H$ equals the number of connected components minus the number of independent cycles, including self-loops. 

Something analogous happens when we more generally add hyperedges. In contrast to the case of graphs, however, by adding hyperedges, we can potentially eliminate all eigenvalues $0$ of $L^V$. For a graph, $L^V$ always has the eigenvalue $0$, as should be clear from the preceding or also follows from Lemma \ref{lemmabalance} below. We shall see examples of hypergraphs where $L^V$ has only positive eigenvalues. But let us first make some obvious observations.
\begin{lem}\label{lemmacatalyst}
  On a hypergraph with a single hyperedge, $L^V$ has $0$ as an eigenvalue. More precisely, $m_V=|V|-1$ if not every vertex is a catalyst and $m_V=|V|$ if every vertex is a catalyst. 
\end{lem}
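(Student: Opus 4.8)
The plan is to compute $L^V$ directly from the single-hyperedge case and exhibit its kernel. Let $H=\{h\}$ with $h=(V_h,W_h)$, so every vertex lying in $h$ has degree $1$ and every vertex not in $h$ has degree $0$ (such isolated vertices contribute trivially an eigenvalue $0$ each, but in the interesting case $V$ consists exactly of the vertices of $h$, which I will assume after noting the reduction). First I would observe that $\delta f(h)=\sum_{v\in V_h}f(v)-\sum_{v\in W_h}f(v)$, and that a catalyst $v\in V_h\cap W_h$ contributes $f(v)-f(v)=0$ to this sum; so $\delta f(h)$ depends only on the values of $f$ on the non-catalyst vertices. Then, from the formula for $\delta^*$ with all degrees equal to $1$, one gets $L^Vf(v)=\delta^*(\delta f)(v)=\pm\delta f(h)$, with sign $+$ if $v$ is an input, $-$ if $v$ is an output, and — crucially — $L^Vf(v)=0$ if $v$ is a catalyst, since the catalyst term $\delta f(h)-\delta f(h)$ cancels.

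From this explicit description the kernel is immediate. Since $L^Vf$ is, up to sign, the constant $\delta f(h)$ on all non-catalyst vertices and $0$ on catalysts, we have $L^Vf=0$ if and only if $\delta f(h)=0$, i.e. if and only if $\sum_{v\in V_h\setminus W_h}f(v)=\sum_{v\in W_h\setminus V_h}f(v)$. If every vertex is a catalyst, this condition is vacuous, so $\ker L^V$ is all of $\{f:V\to\mathbb{R}\}$ and $m_V=|V|$. If not every vertex is a catalyst, the single linear functional $f\mapsto\delta f(h)$ is not identically zero, so its kernel has codimension exactly $1$, giving $m_V=|V|-1$. In either case $0$ is an eigenvalue. (Alternatively, the second statement can be read off from Corollary \ref{cor3}: here $|H|=1$, and $L^H$ acts on a one-dimensional space by multiplication by $(\delta^*\gamma,\delta^*\gamma)_V/(\gamma,\gamma)_H$, which is $0$ exactly when $\delta^*\gamma=0$, i.e. exactly when every vertex is a catalyst; so $m_H=1$ if all vertices are catalysts and $m_H=0$ otherwise, and then $m_V=m_H+|V|-1$ gives the claim.)

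I do not anticipate a genuine obstacle here; the only thing requiring a little care is the bookkeeping of signs and of the catalyst cancellation in the formula for $L^V$, and the explicit handling of vertices of $V$ that happen not to lie in the hyperedge at all (which should either be excluded by convention or absorbed as extra trivial zero eigenvalues, shifting both the stated multiplicities and the count $|V|$ accordingly). I would present the direct computation of $L^V$ as the main step and relegate the $L^H$/Corollary \ref{cor3} route to a remark, since the direct argument also makes transparent \emph{why} catalysts are exactly the vertices on which the operator is degenerate.
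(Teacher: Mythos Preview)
Your proof is correct. Your primary argument---computing $L^Vf$ explicitly on the single hyperedge and identifying $\ker L^V$ with the kernel of the single linear functional $f\mapsto\delta f(h)$---is a different route from the paper's. The paper instead forward-references Example~\ref{ex1}, which computes the unique eigenvalue of $L^H$ on a one-hyperedge hypergraph via the Rayleigh quotient and shows it vanishes exactly when every vertex is a catalyst; then Corollary~\ref{cor3} (i.e., $m_V-m_H=|V|-|H|$) yields the multiplicity of $0$ for $L^V$. This is precisely the alternative you sketch in your parenthetical remark, so you have in fact recovered the paper's argument as well. Your direct approach is more self-contained and makes the role of catalysts in the kernel transparent without invoking the min--max machinery or the $L^V$/$L^H$ duality; the paper's approach is shorter once those tools are in place and illustrates how the two operators interact. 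Your caveat about isolated vertices is reasonable but moot here: vertices of degree $0$ would make $(\cdot,\cdot)_V$ degenerate and $\delta^*$ undefined, so the paper implicitly assumes every vertex lies in the hyperedge.
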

\begin{proof}
In Example \ref{ex1} we shall see that, on a hypergraph with a single hyperedge, the only eigenvalue of $L^H$ is non-zero if and only if not every vertex is a catalyst. Therefore, by (\ref{mult}), $m_V=|V|-1$ if not every vertex is a catalyst and $m_V=|V|$ otherwise.
\end{proof}

In order to investigate this in more detail, we observe that by \eqref{pos1}, a function $f$ on the vertex set satisfies $L^Vf=0$   if and only  for every $h\in H$,
					\begin{equation}\label{eigenf0}\sum_{v_i \text{ input of }h}f(v_i)=\sum_{v^j \text{ output of }h}f(v^j).\end{equation}
Thus, to create an eigenvalue $0$ of $L^V$, we need a function  $f:V\rightarrow \mathbb{R}$ such that is not identically $0$ and satisfies \eqref{eigenf0}. \\

Similarly, by \eqref{pos2}, in order to get an eigenvalue $0$ of $L^H$, we need $\gamma: H \to \mathbb{R}$ satisfying \eqref{or} and
\begin{equation}\label{mult0LH}
\sum_{h_{\text{in}}: i\text{ input}}\gamma(h_{\text{in}})=\sum_{h_{\text{out}}: i\text{ output}}\gamma(h_{\text{out}})
\end{equation}
for every vertex $i$. \\
And the multiplicity of the eigenvalue $0$ of $L^V$ and $L^H$ then is given by the number of linearly independent, or since we have scalar products, equivalently by the number of orthogonal $f$ and $\gamma$, resp.,  satisfying these equations. Conversely, if there is no such $f$ or $\gamma$, then the corresponding multiplicity is $0$. \\

 For instance, (\ref{eigenf0}) already implies
\begin{lem}\label{lemmaalways}
  If a hypergraph has a vertex $v_0$ that is a catalyst for every hyperedge that it is contained in, then $L^V$ has $0$ as an eigenvalue.
\end{lem}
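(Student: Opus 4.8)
The plan is to construct an explicit nonzero function $f:V\to\mathbb{R}$ that satisfies the characterization \eqref{eigenf0} of functions in the kernel of $L^V$, using the special vertex $v_0$. Since $v_0$ is a catalyst for every hyperedge containing it, in each such hyperedge $v_0$ appears both as an input and as an output, so its two contributions to the two sides of \eqref{eigenf0} cancel. This suggests taking $f$ to be supported on $v_0$ alone.

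Concretely, I would set $f(v_0):=1$ and $f(v):=0$ for all $v\neq v_0$. Then I verify \eqref{eigenf0} hyperedge by hyperedge. Fix $h\in H$. If $v_0\notin h$, then every vertex appearing in $h$ (as input or output) has $f$-value $0$, so both sides of \eqref{eigenf0} are $0$. If $v_0\in h$, then by hypothesis $v_0$ is a catalyst for $h$, hence $v_0$ is both an input and an output of $h$; every other vertex of $h$ has $f$-value $0$. Therefore $\sum_{v_i\text{ input of }h}f(v_i)=f(v_0)=1=f(v_0)=\sum_{v^j\text{ output of }h}f(v^j)$. So \eqref{eigenf0} holds for every $h$, and since $f$ is not identically $0$, it follows from \eqref{pos1} that $L^Vf=0$, i.e.\ $0$ is an eigenvalue of $L^V$.

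There is essentially no obstacle here: the only mild subtlety is to make sure that the catalyst condition really does force cancellation in \eqref{eigenf0} rather than merely the weaker statement that $v_0$ contributes to both sums — but since each hyperedge's input multiset and output multiset each contain $v_0$ exactly once (or at least the same number of times, under the natural reading that a vertex is "an input" or "an output" of $h$), the contributions of $v_0$ to the two sides of \eqref{eigenf0} are equal, and the contributions of all other vertices vanish. One could also phrase the argument slightly more generally: any function supported on the set of vertices that are catalysts for all of their incident hyperedges lies in $\ker L^V$, which immediately gives the lemma and in fact a lower bound $m_V\geq 1$ whenever such a vertex exists.
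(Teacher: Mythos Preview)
Your proof is correct and follows exactly the same approach as the paper: define $f(v_0)=1$ and $f(v)=0$ for $v\neq v_0$, and verify \eqref{eigenf0}. The paper's proof is just a one-line version of what you wrote.
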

\begin{proof}
 Let $f(v_0)=1$ and $f(v)=0$ for $v\neq v_0$. This then satisfies \eqref{eigenf0}. 
\end{proof}
	\begin{rmk}
	Any function $f:V\rightarrow\mathbb{R}$ is an eigenfunction for the eigenvalue $0$ in some hypergraph that has vertex set $V$.  Construct a hypergraph $\Gamma$ in which all the vertices $v_1,\ldots,v_k$ of $V$ are always catalysts. Then $f$ satisfies (\ref{eigenf0}) for $\Gamma$.
					\end{rmk}
In fact, we have 
\begin{prop}\label{charlambdaN}If $k$ vertices are always catalysts, then $m_V \ge k$. \\
And $m_V=N$, or equivalently, $\mu_1=0$, that is, $0$ is the only eigenvalue $ \Longleftrightarrow$ all vertices are always catalysts.
				\end{prop}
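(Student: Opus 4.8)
The plan is to prove both halves of the proposition by working directly with the characterization \eqref{eigenf0} of functions in the kernel of $L^V$, namely that $L^Vf=0$ if and only if $\sum_{v_i\text{ input of }h}f(v_i)=\sum_{v^j\text{ output of }h}f(v^j)$ for every $h\in H$. For the first statement, suppose $v_{i_1},\dots,v_{i_k}$ are always catalysts. For each such $v_{i_\ell}$, define $f_\ell$ by $f_\ell(v_{i_\ell})=1$ and $f_\ell(v)=0$ otherwise; by Lemma \ref{lemmaalways} each $f_\ell$ satisfies \eqref{eigenf0} and hence lies in $\ker L^V$. These $k$ functions are clearly linearly independent (they have disjoint supports), so $m_V=\dim\ker L^V\ge k$.

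For the equivalence, the direction ($\Leftarrow$) is immediate from the first part applied with $k=N$: if all $N$ vertices are always catalysts then $m_V\ge N$, and since $m_V\le N$ always, $m_V=N$, which means $0$ is the only eigenvalue and in particular $\mu_1=0$. The substantive direction is ($\Rightarrow$): assume $m_V=N$, equivalently that \emph{every} function $f:V\to\mathbb{R}$ satisfies \eqref{eigenf0}, and deduce that every vertex is a catalyst for every hyperedge containing it. Here I would test \eqref{eigenf0} on the indicator functions: fix a hyperedge $h$ and a vertex $w$, and let $f=\mathbf{1}_{\{w\}}$. Then \eqref{eigenf0} for $h$ reads $(\text{number of times }w\text{ is an input of }h)=(\text{number of times }w\text{ is an output of }h)$. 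Since in the present setup a vertex is either an input of $h$ or not (each side is $0$ or $1$), this forces: $w$ is an input of $h$ if and only if $w$ is an output of $h$. As $h$ and $w$ were arbitrary, every vertex of every hyperedge is simultaneously an input and an output, i.e.\ a catalyst — and this holds for every hyperedge, so every vertex is always a catalyst.

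The only real subtlety is bookkeeping about multiplicities in \eqref{eigenf0}: one must use that a vertex appears in the input list of a given hyperedge $h$ with multiplicity at most one (it is a member of the set $V_h\in\mathcal{P}(V)$ or it is not), so that the equality of the two sums, when $f$ is a single indicator, really does pin down membership rather than merely a count. Granting that, the argument is short. I expect the main (modest) obstacle to be stating this indicator-function test cleanly and making explicit that "$f$ arbitrary satisfies \eqref{eigenf0}" is genuinely equivalent to "$m_V=N$" — which follows since $L^V$ is self-adjoint (Lemma \ref{lem1}), so its kernel has dimension equal to the multiplicity of the eigenvalue $0$, and a subspace of the $N$-dimensional function space has dimension $N$ precisely when it is the whole space.
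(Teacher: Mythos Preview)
Your proof is correct and follows essentially the same approach as the paper. Both arguments test with the indicator function at a single vertex: the paper proves the contrapositive of $(\Rightarrow)$ by taking a non-catalyst vertex $\hat{v}$, setting $f=\mathbf{1}_{\{\hat{v}\}}/\sqrt{\deg\hat{v}}$, and computing that $(\delta f,\delta f)_H>0$ so $\mu_1>0$; you prove $(\Rightarrow)$ directly by observing that $m_V=N$ forces every indicator to satisfy \eqref{eigenf0}, which pins down catalyst status. Your version is a touch more explicit about the linear independence in the first part, but the underlying idea is identical.
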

				\begin{proof}The first part and the implication $\Longleftarrow$ are clear from the proof of Lemma \ref{lemmaalways}. Let's prove $\Longrightarrow$. In particular, let's assume that there exists at least one vertex $\hat{v}\in\hat{h}$ which is not a catalyst for $\hat{h}$ (without loss of generality, we can assume that it is an input). We want to prove that $\mu_1>0$. Let $f:V\rightarrow\mathbb{R}$ such that $f(w)=0$ for all $w\neq \hat{v}$ and such that \begin{equation*}
					f(\hat{v})=\frac{1}{\sqrt{\deg \hat{v}}}.
					\end{equation*}Then $\sum_{v\in V}\deg v\cdot f(v)^2=1$ and
					\begin{align*}
					&\sum_{h\in H}\biggl(\sum_{v_i \text{ input of }h}f(v_i)-\sum_{v^j \text{ output of }h}f(v^j)\biggr)^2\\
					&\geq \biggl(\sum_{v_i \text{ input of }\hat{h}}f(v_i)-\sum_{v^j \text{ output of }\hat{h}}f(v^j)\biggr)^2\\
					&=\biggl(\frac{1}{\sqrt{\deg \hat{v}}}\biggr)^2\\
					&>0.
					\end{align*}Therefore, $\mu_1>0$.
				\end{proof}
				\begin{rmk}\label{lambda2>0}
					The previous proposition implies that, unlike the case of the graphs, the multiplicity of the eigenvalue $0$ for $L^V$ is in general not equal to the number of connected components of the hypergraph (in particular, we don't have that $\mu_{N-1}> 0$ for every connected hypergraph) and, analogously, the multiplicity of the eigenvalue $0$ for $L^H$ does not count, in general, the cycles of the hypergraph.
				\end{rmk}							
				We shall now see some further special cases of hypergraphs with $\mu_N=0$.
				\begin{lem}\label{lemmabalance}
					Let $\Gamma$ satisfy 
					\begin{equation}
					\label{balance}
					\big|\text{inputs of }h\big|=\big|\text{outputs of }h\big| \text{  for each }h\in H.
					\end{equation}
					Then $L^V$ has the eigenvalue $0$.
				\end{lem}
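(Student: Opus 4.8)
The plan is to exhibit an explicit nonzero function $f:V\to\mathbb{R}$ satisfying the characterization \eqref{eigenf0}, which by \eqref{pos1} is equivalent to $L^Vf=0$. The natural candidate, motivated by the balance condition \eqref{balance}, is the constant function $f\equiv 1$. Indeed, for any hyperedge $h$, the left-hand side of \eqref{eigenf0} becomes $\sum_{v_i\text{ input of }h}1=\big|\text{inputs of }h\big|$, and the right-hand side becomes $\sum_{v^j\text{ output of }h}1=\big|\text{outputs of }h\big|$. By hypothesis \eqref{balance} these two counts agree for every $h\in H$, so \eqref{eigenf0} holds for all hyperedges.

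The steps I would carry out, in order, are: first, recall that by \eqref{pos1} we have $(L^Vf,f)_V=(\delta f,\delta f)_H$, so $L^Vf=0$ if and only if $\delta f(h)=0$ for every $h\in H$, which is precisely condition \eqref{eigenf0}; second, take $f$ to be the constant function equal to $1$ (which is not identically zero, hence a genuine eigenfunction); third, compute $\delta f(h)=\big|\text{inputs of }h\big|-\big|\text{outputs of }h\big|$ directly from the definition of $\delta$; fourth, invoke \eqref{balance} to conclude this vanishes for each $h$. Therefore $f$ is an eigenfunction of $L^V$ with eigenvalue $0$, so $0$ belongs to the spectrum of $L^V$.

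I do not expect any real obstacle here: the argument is a one-line verification once the correct test function is identified. The only point worth a moment's care is the observation, already established in the text via \eqref{pos1}, that the kernel of $L^V$ is exactly the set of $f$ killed by $\delta$ — this is what lets us check the much simpler condition $\delta f\equiv 0$ rather than unwinding the double sum in the definition of $L^V$. One could alternatively substitute $f\equiv 1$ straight into the formula for $L^Vf(v)$ and watch the terms cancel using \eqref{balance}, but routing through $\delta$ is cleaner. It is also worth remarking, though not needed for the statement, that this shows more than claimed: the constant functions always lie in the $0$-eigenspace under \eqref{balance}, giving at least one such eigenfunction on each connected component.
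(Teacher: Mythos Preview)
Your proof is correct and follows exactly the same approach as the paper's: take a constant function and verify that it satisfies \eqref{eigenf0} by virtue of \eqref{balance}. The paper's proof is the one-line version of what you wrote; your additional unpacking of why \eqref{eigenf0} characterizes the kernel of $L^V$ via \eqref{pos1} is accurate but already established in the text just before the lemma.
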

				This holds in particular for graphs, because there, every edge has precisely one input and one output. 
				\begin{proof}
					When \eqref{balance} holds, then any constant function satisfies \eqref{eigenf0}. 
				\end{proof}
				\begin{rmk}	In fact, some such condition is necessary. 
					More precisely, the fact that $\mu_N=0$ for a hypergraph means that we can give a \emph{weight} $f:V\rightarrow\mathbb{R}$ to the vertices such that, in each hyperedge, inputs and outputs have in total the same weight.
									
									\begin{prop}\label{lambda1=0}
										If $\Gamma$ is one of the following hypergraphs, then $\mu_N=0$:
										\begin{enumerate}
											\item $\Gamma$ is given by the union of a hypergraph $\Gamma'$ with $\mu'_N=0$ together with a hyperedge $h$ such that there exists at least one $v\in h\setminus\Gamma'$;
											\item $\Gamma$ is given by the union of a hypergraph $\Gamma'$ with $\mu'_N=0$ together with a hyperedge $h$ that involves only vertices of $\Gamma'$ and has only catalysts.
										\end{enumerate}
									\end{prop}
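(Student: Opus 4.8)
The plan is to rely on the characterization recorded just after \eqref{pos2}: a function $f$ on the vertex set of $\Gamma$ satisfies $L^Vf=0$ if and only if it satisfies \eqref{eigenf0} for every $h\in H$. So in both cases it suffices to exhibit some $f\not\equiv 0$ on the vertices of $\Gamma$ for which, in each hyperedge, the inputs and the outputs carry the same total $f$-value. Since $\mu'_N=0$, I would fix once and for all a nonzero $f':V'\to\mathbb{R}$ satisfying \eqref{eigenf0} for every hyperedge of $\Gamma'$. The crucial structural point is that the hyperedges of $\Gamma$ are those of $\Gamma'$ together with the single new hyperedge $h$, and that every hyperedge of $\Gamma'$ involves only vertices of $V'$; hence any extension of $f'$ to the vertex set of $\Gamma$ automatically still satisfies \eqref{eigenf0} on all the old hyperedges, and the whole problem reduces to arranging \eqref{eigenf0} for $h$.

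I would dispose of case (2) first, since there the vertex set of $\Gamma$ is exactly $V'$ and no extension is needed: take $f:=f'$. Because $h$ has only catalysts, its set of inputs and its set of outputs both equal the full vertex set of $h$, so the two sides of \eqref{eigenf0} for $h$ are the same sum and the equation holds trivially for $f$. Thus $L^Vf=0$ with $f\neq 0$, i.e. $\mu_N=0$.

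For case (1), pick $v_0\in h\setminus\Gamma'$. Suppose first that $v_0$ is not a catalyst of $h$; after possibly reversing the orientation of $h$ we may assume $v_0$ is an input but not an output of $h$. I would extend $f'$ by declaring $f(v):=0$ for every new vertex other than $v_0$, and then choosing
\[
f(v_0):=\sum_{v^j\text{ output of }h}f(v^j)-\sum_{\substack{v_i\text{ input of }h\\ v_i\neq v_0}}f(v_i),
\]
which is legitimate since $v_0$ does not appear on the right-hand side; by construction \eqref{eigenf0} now holds for $h$. Together with the structural remark above this gives $L^Vf=0$, and $f\neq0$ because $f'\neq0$, so $\mu_N=0$. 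Suppose instead that every vertex of $h\setminus\Gamma'$ is a catalyst of $h$; then such a vertex $v_0$ belongs to no hyperedge of $\Gamma'$, so $h$ is the only hyperedge of $\Gamma$ containing $v_0$, and $v_0$ is a catalyst of it. Hence $v_0$ is a catalyst for every hyperedge containing it, and Lemma \ref{lemmaalways} applies verbatim to give $\mu_N=0$.

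The one point that needs care is exactly this splitting inside case (1): the new vertex can be used to solve the single linear constraint imposed by $h$ precisely when it is not a catalyst of $h$, and the complementary degenerate situation must be pushed onto Lemma \ref{lemmaalways}. The remaining ingredients — that extending $f'$ does not spoil the equations attached to the hyperedges of $\Gamma'$, and that non-vanishing is inherited from $f'$ — are routine, as is the verification in case (2).
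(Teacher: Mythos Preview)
Your proof is correct and follows essentially the same approach as the paper's: both exploit \eqref{eigenf0}, extend the eigenfunction $f'$ of $\Gamma'$ by zero on the new vertices except one, and solve the single remaining linear equation at a non-catalyst new vertex, deferring to Lemma~\ref{lemmaalways} when only catalysts are available among the new vertices. The only cosmetic differences are that you treat case~(2) first, reduce to the input case via an orientation flip rather than writing out both formulas, and organize the case split in~(1) in the complementary order.
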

									\begin{proof}
										\begin{enumerate}
											\item Assume that $\Gamma$ is given by the union of a hypergraph $\Gamma'$ with $\mu'_N=0$ together with a hyperedge $h$ which involves at least one vertex that is not in $\Gamma'$. Since $\mu'_N=0$, there exists a function $f'$ for $\Gamma'$ that satisfies (\ref{eigenf0}). If there is a vertex in $h\setminus\Gamma'$ which is a catalyst, we can apply Lemma \ref{lemmaalways}. If $h$ involves at least one vertex $\hat{v}\notin\Gamma'$ which is not a catalyst, let
											\begin{equation*}
											f(v):=f'(v)
											\end{equation*}for every $v\in\Gamma'$;
											\begin{equation*}
											f(w):=0
											\end{equation*}for every vertex $w\in h\setminus\Gamma'$, $w\neq \hat{v}$;
											\begin{equation*}
											f(\hat{v}):=\sum_{v^j\in\Gamma': v^j \text{ output of }h}f'(v^j)-\sum_{v_i \in\Gamma': v_i \text{ input of }h}f'(v_i)
											\end{equation*}if $\hat{v}$ is an input and not an output;
											\begin{equation*}
											f(\hat{v}):=\sum_{v_i\in\Gamma': v_i \text{ input of }h}f'(v_i)-\sum_{v^j\in\Gamma': v^j \text{ output of }h}f'(v^j)
											\end{equation*}if $\hat{v}$ is an output and not an input.\newline
											Then $f$ satisfies (\ref{eigenf0}).
											\item Assume that $\Gamma$ is given by the union of a hypergraph $\Gamma'$ with $\mu'_N=0$ together with a hyperedge $h$ which involves only vertices of $\Gamma'$ and which has only catalysts. Since $\mu'_N=0$, there exists a function $f'$ for $\Gamma'$ that satisfies (\ref{eigenf0}). Such $f'$ satisfies (\ref{eigenf0}) also for $\Gamma$.
										\end{enumerate}
									\end{proof}
					We shall now see two examples of hypergraphs with $\mu_N>0$, that is, where $L^V$ does not have $0$ as an eigenvalue. 
				\end{rmk}
				\begin{lem}\label{lambda1>0}
					Let $\Gamma$ be the union of a connected graph $\Gamma'$ with a hyperedge $h$ that involves only vertices of $\Gamma'$ and such that $\big|\text{inputs of }h\big|\neq\big|\text{outputs of }h\big|$. Then $\mu_N>0$.
				\end{lem}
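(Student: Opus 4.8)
The plan is to argue by contradiction, exploiting the characterization of eigenfunctions for the eigenvalue $0$ given in \eqref{eigenf0}. Suppose $\mu_N=0$; then, since $L^V$ is non-negative (Lemma \ref{lem2}), there exists a function $f:V\to\mathbb{R}$, not identically zero, with $\sum_{v_i\text{ input of }h'}f(v_i)=\sum_{v^j\text{ output of }h'}f(v^j)$ for \emph{every} hyperedge $h'$ of $\Gamma$. The goal is to show that such an $f$ must vanish identically, which is the desired contradiction. Note that the vertex set $V$ of $\Gamma$ coincides with the vertex set of $\Gamma'$, because by hypothesis $h$ involves only vertices of $\Gamma'$.

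The first step is to exploit the edges of $\Gamma'$. Since $\Gamma'$ is a graph, every edge $e=[v_0,v_1]$ has exactly one input and exactly one output, so \eqref{eigenf0} applied to $e$ reduces simply to $f(v_0)=f(v_1)$; a self-loop imposes no constraint. Because $\Gamma'$ is connected, chaining these equalities along a path (in the sense of the definition of connectedness) between any two vertices shows that $f$ is constant on the vertex set of $\Gamma'$, say $f\equiv c$.

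The second step is to bring in the extra hyperedge $h$. As $h$ involves only vertices of $\Gamma'$, the function $f$ takes the value $c$ on all of them, so \eqref{eigenf0} for $h$ becomes $c\cdot\big|\text{inputs of }h\big| = c\cdot\big|\text{outputs of }h\big|$. Since $\big|\text{inputs of }h\big|\neq\big|\text{outputs of }h\big|$ by assumption, this forces $c=0$, hence $f\equiv 0$ on $V$, contradicting $f\not\equiv 0$. Therefore $0$ is not an eigenvalue of $L^V$, i.e. $\mu_N>0$.

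There is no real obstacle here: the argument is essentially the observation that on a connected graph the only solutions of \eqref{eigenf0} are constants, combined with the fact that a nonzero constant cannot satisfy the balance condition forced by $h$. The only point requiring a little care is the reduction of \eqref{eigenf0} to $f(v_0)=f(v_1)$ on graph edges and the correct bookkeeping of inputs versus outputs, together with noting that adding $h$ does not enlarge the vertex set.
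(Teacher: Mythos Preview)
Your argument is correct and follows essentially the same route as the paper's proof: both use that any $f$ satisfying \eqref{eigenf0} on the connected graph $\Gamma'$ must be constant, and then observe that a nonzero constant cannot satisfy \eqref{eigenf0} for a hyperedge with unequal numbers of inputs and outputs. You simply spell out in more detail the steps that the paper leaves implicit (the reduction to $f(v_0)=f(v_1)$ on edges, the path-chaining, and the observation that $V$ is not enlarged by $h$).
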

				\begin{proof}We know that $f$ satisfies (\ref{eigenf0}) on a connected graph $\Gamma'$ if and only if $f$ is a constant function. But a constant function $f$ can clearly not satisfy (\ref{eigenf0}) for a hyperedge $h$ such that $\big|\text{inputs of }h\big|\neq\big|\text{outputs of }h\big|$. Therefore, $\mu_N$ can not be $0$ in this case.
				\end{proof}
				\begin{lem}
					Let $\Gamma$ be the hypergraph on $N>2$ vertices $v_1,\ldots,v_N$ with $N$ hyperedges $h_1,\ldots,h_N$ such that, for each $i\in\{1,\ldots,N\}$, $h_i$ has:\begin{itemize}
						\item $v_i$ as input, and
						\item every $v_j$ with $j\neq i$ as output.
					\end{itemize} Then $\mu_N>0$.
				\end{lem}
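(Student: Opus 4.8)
The plan is to show directly that no nonzero $f:V\to\mathbb R$ can satisfy the balance condition \eqref{eigenf0} for all of the hyperedges $h_1,\dots,h_N$, and then invoke \eqref{pos1} to conclude $\mu_N>0$. Spelling out \eqref{eigenf0} for the hyperedge $h_i$, whose unique input is $v_i$ and whose outputs are all the remaining vertices, gives
\begin{equation*}
f(v_i)=\sum_{j\neq i}f(v_j),\qquad i=1,\dots,N.
\end{equation*}

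First I would introduce the abbreviation $S:=\sum_{j=1}^N f(v_j)$; then the $i$-th equation reads $f(v_i)=S-f(v_i)$, i.e. $2f(v_i)=S$ for every $i$. Hence all values $f(v_i)$ are equal, say to a common value $c$, and $S=Nc$; substituting back, $2c=Nc$, so $(N-2)c=0$. Since $N>2$ this forces $c=0$, hence $f\equiv 0$. Therefore the only solution of \eqref{eigenf0} on $\Gamma$ is the zero function, so by \eqref{pos1} there is no nonzero $f$ with $(L^Vf,f)_V=0$; since $L^V$ is non-negative and self-adjoint (Lemmas \ref{lem1}, \ref{lem2}), this means $0$ is not an eigenvalue, i.e. $\mu_N>0$.

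There is really no obstacle here: the argument is a two-line linear-algebra computation, and the hypothesis $N>2$ is exactly what makes the scalar $N-2$ invertible. The only point requiring a word of care is the reduction from "$L^Vf=0$" to "\eqref{eigenf0} holds for every $h$," which is precisely the characterization recorded just after \eqref{pos1} in the text, so it may simply be cited. One could alternatively phrase the conclusion via Corollary \ref{cor1} and the observation that $L^V$ acts on the $N$-dimensional space of functions on $V$, but the cleanest route is the Rayleigh-quotient identity $(L^Vf,f)_V=(\delta f,\delta f)_H$ together with the fact that $\delta f=0$ already forces $f=0$ by the computation above.
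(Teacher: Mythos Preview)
Your proof is correct and follows essentially the same approach as the paper: both show that the balance conditions \eqref{eigenf0} force $f$ to be constant and then use $N>2$ to conclude the constant is zero. Your use of the total sum $S=\sum_j f(v_j)$ to get $2f(v_i)=S$ is a slightly slicker algebraic route than the paper's pairwise manipulation, but the underlying idea is identical.
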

				\begin{proof}Let $f:V\rightarrow\mathbb{R}$ be a function that satisfies (\ref{eigenf0}). Then for every $i,l\in\{1,\ldots,N\}$,\begin{equation*}
					f(v_i)=\sum_{j\neq i}f(v_j)=f(v_l)+\sum_{j\neq i,l}f(v_j)=f(v_i)+2\cdot \sum_{j\neq i,l}f(v_j).
					\end{equation*}Therefore $\sum_{j\neq i,l}f(v_j)=0$ and $f(v_i)=f(v_l)$. Since this is true for every $i,l\in\{1,\ldots,N\}$, $f$ must be the zero function. This implies that $\mu_N>0$.
				\end{proof}

Now let us see how to apply (\ref{mult0LH}). First, when we have a closed system of reactions, we can take a $\gamma$ that has the same nonzero value on all hyperedges involved in that system and vanishes on all other hyperedges. Such a $\gamma$ then satisfies \eqref{mult0LH} because every vertex in such a system appears the same number of times as input as as output for hyperedges belonging to that system. This is formalized in the next Lemma. 
		
				\begin{lem}\label{lemmaclosedsys}
			If $\Gamma$ has a closed system of reactions, then $\mu_M^H=0$.
				\end{lem}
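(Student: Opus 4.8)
The plan is to exhibit an explicit nonzero function $\gamma$ on the oriented hyperedges that satisfies condition \eqref{mult0LH}, which by \eqref{pos2} is exactly the condition for $\gamma$ to lie in the kernel of $L^H$. Since the existence of such a $\gamma$ forces $0$ to be an eigenvalue of $L^H$, and since $\mu^H_M$ is the smallest eigenvalue, and since $L^H$ is non-negative by Lemma \ref{lem2} (so all eigenvalues are $\ge 0$), we conclude $\mu^H_M = 0$.

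Concretely, let $\mathcal{S} = (V', H')$ be the given closed system of reactions. I would fix, once and for all, one orientation for each hyperedge in $H'$ and define
\begin{equation*}
\gamma(h,+) := 1 \text{ for } h \in H', \qquad \gamma(h,+) := 0 \text{ for } h \notin H',
\end{equation*}
and extend $\gamma$ to the other orientations by the rule \eqref{or}, i.e. $\gamma(h,-) = -\gamma(h,+)$. This $\gamma$ is nonzero because $H' \neq \emptyset$. The key step is then to verify \eqref{mult0LH} at an arbitrary vertex $i$: the left-hand side counts (with sign bookkeeping from the chosen orientations) the hyperedges of $H'$ having $i$ as an input, and the right-hand side counts those having $i$ as an output. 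By condition (3) in the definition of a closed system, each $v \in V'$ appears as often as input as as output among the hyperedges of $H'$; for a vertex $i \notin V'$, both sides are $0$ since $\gamma$ vanishes off $H'$. Hence both sides agree and \eqref{mult0LH} holds.

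The one point requiring a little care is the interplay between the chosen orientations and the sign convention \eqref{or}: the counting in condition (3) of the closed-system definition is phrased in terms of ``input'' versus ``output'' occurrences, and one must make sure these match the terms $\sum_{h_{\text{in}}: i\text{ input}}\gamma(h_{\text{in}})$ and $\sum_{h_{\text{out}}: i\text{ output}}\gamma(h_{\text{out}})$ in \eqref{mult0LH} under the fixed orientation of each $h \in H'$. This is the main (mild) obstacle, but it is resolved simply by noting that each $h \in H'$ contributes its single chosen orientation consistently to both sums, so the value $\gamma(h,+) = 1$ appears once on the input side for each input-incidence of $i$ in $h$ and once on the output side for each output-incidence; the balance condition then makes the two totals equal. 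Once \eqref{mult0LH} is checked, the conclusion $\mu^H_M = 0$ is immediate from \eqref{pos2} and non-negativity of $L^H$.
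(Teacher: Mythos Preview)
Your proposal is correct and follows essentially the same approach as the paper: define $\gamma$ to be $1$ on the hyperedges of the closed system $H'$ and $0$ elsewhere, then use condition (3) of the closed-system definition to verify \eqref{mult0LH}. You have in fact supplied more detail than the paper, which simply asserts that this $\gamma$ satisfies \eqref{mult0LH} without spelling out the orientation bookkeeping or the case $i\notin V'$.
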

				\begin{proof}
					Let $\mathcal{S}=(V',H')$ be a closed system in $\Gamma$. Let $\gamma: H\rightarrow\mathbb{R}$ be defined as $\gamma(h'):=1$ for all $h'\in H'$ and $\gamma(h):=0$ for all $h\in H\backslash H'$. Then $\gamma$ satisfies (\ref{mult0LH}). Therefore $\mu_M^H=0$.
				\end{proof}
								\begin{rmk}
									The claim of Lemma \ref{lemmaclosedsys} is actually an \emph{if and only if} for both the case of graphs (for which we know that the multiplicity of $0$ for $L^H$ is equal to the number of oriented cycles) and the case of $\Gamma$ containing only  a single hyperedge. In fact, as we shall see  in Example \ref{ex1}, in this case $\mu_M^H = 0$ if and only if all vertices are catalysts, that is, if and only if there is a closed system of reactions in $\Gamma$ (which is $\Gamma$ itself). But  Example \ref{examplethm22} will show that the converse of Lemma \ref{lemmaclosedsys} does not hold. 
								\end{rmk}
In order to prepare that example, we shall first present another example of a closed system of reactions
\begin{ex}\label{examplethm22a}
Consider a hypergraph with three vertices $v_1,v_2,v_3$, with a hyperedge $h_1$ with input  $v_1$ and output  $v_1, v_2$ and another hyperedge $h_2$  with input  $v_2, v_3$ and output $v_3$.  Thus, $v_1$ and $v_3$ are catalysts. In this system, $v_2$ is created in $h_1$ with the help of $v_1$, without using up $v_1$, and it is destroyed in $h_2$ with the help of $v_3$, without creating anything. Each vertex appears once as input and once as output, and thus, this hypergraph represents a closed system of reactions in the sense of the definition. We shall call this a \emph{source-sink system}.
\end{ex}
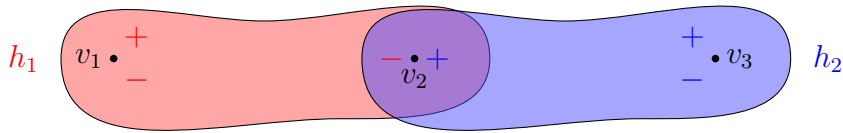
\begin{figure}[h]
	\begin{center}
		\begin{tikzpicture}
		\node (v1) at (1,0) {};
		\node (v2) at (3,0) {};
		\node (v3) at (5,0) {};
		\node (v4) at (7,0) {};
		\node (v5) at (9,0) {};
		
		\begin{scope}[fill opacity=0.5]
		\filldraw[fill=red!70] ($(v1)+(-0.7,0)$) 
		to[out=90,in=180] ($(v2) + (0,0.5)$) 
		to[out=0,in=90] ($(v3) + (1,0)$)
		to[out=270,in=0] ($(v2) + (1,-0.8)$)
		to[out=180,in=270] ($(v1)+(-0.7,0)$);
		\filldraw[fill=blue!70] ($(v3)+(-0.7,0)$) 
		to[out=90,in=180] ($(v4) + (0,0.5)$) 
		to[out=0,in=90] ($(v5) + (1,0)$)
		to[out=270,in=0] ($(v4) + (1,-0.8)$)
		to[out=180,in=270] ($(v3)+(-0.7,0)$);
		\end{scope}

		\fill (v1) circle (0.05) node [left] {$v_1$} node [above right] {\color{red}$+$} node [below right] {\color{red}$-$};
		\fill (v3) circle (0.05) node [below] {$v_2$} node [right] {\color{blue}$+$} node [left] {\color{red}$-$};
		\fill (v5) circle (0.05) node [right] {$v_3$} node [above left] {\color{blue}$+$} node [below left] {\color{blue}$-$};
		
		\node at (-0.2,0) {\color{red} $h_1$};
		\node at (10.5,0) {\color{blue} $h_2$};
		\end{tikzpicture}
	\end{center}
	\caption{The hypergraph in Example \ref{examplethm22a}.}
\end{figure}
We shall now use this principle to create another example that is no longer a closed system of reactions, but makes use of the possibility demonstrated in the previous example to create and destroy products independently. And this will allow us to let the system branch and reunite in between. 
								\begin{ex}\label{examplethm22}
									Let $\Gamma$ be the hypergraph with $4$ vertices $v_1,\ldots,v_4$ and $3$ hyperedges $h_1,h_2,h_3$ such that:
									\begin{enumerate}
										\item $h_1$ has $v_1$ as input and $v_2$ as output;
										\item $h_2$ has $v_1$ as output and $v_3$ as catalyst;
										\item $h_3$ has $v_1$ as input, $v_2$ as input and $v_4$ as catalyst.
									\end{enumerate}
									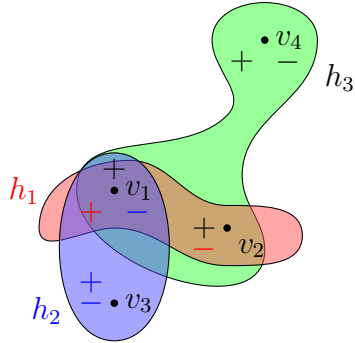
\begin{figure}[h]
										\begin{center}
											\begin{tikzpicture}
											\node (v4) at (4,2.5) {};
											\node (v1) at (2,0.5) {};
											\node (v2) at (3.5,0) {};
											\node (v3) at (2.0,-1) {};
											
											\begin{scope}[fill opacity=0.5]
											\filldraw[fill=green!80] ($(v1)+(-0.5,0)$)
											to[out=90,in=225] ($(v4)+(-0.5,-1)$)
											to[out=45,in=270] ($(v4)+(-0.7,0)$)
											to[out=90,in=180] ($(v4)+(0,0.5)$)
											to[out=0,in=90] ($(v4)+(0.7,0)$)
											to[out=270,in=90] ($(v4)+(-0.3,-1.8)$)
											to[out=270,in=90] ($(v2)+(0.5,-0.3)$)
											to[out=270,in=270] ($(v1)+(-0.5,0)$);
											\filldraw[fill=red!70] ($(v1)+(-1,-0.5)$) 
											to[out=90,in=180] ($(v1) + (0.2,0.4)$) 
											to[out=0,in=180] ($(v2) + (0,0.3)$)
											to[out=0,in=90] ($(v2) + (1,-0.1)$)
											to[out=270,in=0] ($(v2) + (0,-0.5)$)
											to[out=180,in=0] ($(v2) + (-1.5,0)$)
											to[out=180,in=270] ($(v1)+(-1,-0.5)$);
											\filldraw[fill=blue!70] ($(v1)+(0,0.5)$) 
											to[out=0,in=0] ($(v3) + (0,-0.5)$)
											to[out=180,in=180] ($(v1)+(0,0.5)$);
											\end{scope}
											
											\fill (v1) circle (0.05) node [right] {$v_1$} node [below left] {\color{red}$+$} node [below right] {\color{blue}$-$} node [above] {$+$};
											\fill (v2) circle (0.05) node [below right] {$v_2$} node [below left] {\color{red}$-$} node [left] {$+$};
											\fill (v3) circle (0.05) node [right] {$v_3$} node [above left] {\color{blue}$+$} node [left] {\color{blue}$-$};
											\fill (v4) circle (0.05) node [right] {$v_4$} node [below right] {$-$} node [below left] {$+$};
											
											\node at (0.8,0.5) {\color{red}$h_1$};
											\node at (1.1,-1.1) {\color{blue} $h_2$};
											\node at (5,2) {$h_3$};
											\end{tikzpicture}
										\end{center}
										\caption{The hypergraph in Example \ref{examplethm22}.}
									\end{figure}
									This  $\Gamma$ does not contain any closed system. Now, let $\gamma:H\rightarrow\mathbb{R}$ such that $\gamma(h_1):=\gamma(h_3):=\frac{1}{2}$ and $\gamma(h_2):=1$. Then $\gamma$ satisfies (\ref{mult0LH}), therefore $\mu_M^H=0$. 
								\end{ex}
				\begin{prop}\label{propindclosed}
					If $\Gamma$ has $l$ linearly independent closed systems, then
					\begin{equation*}
					\mu_M^H=\ldots=\mu_{M+1-l}^H=0,
					\end{equation*}i.e. the multiplicity of the eigenvalue $0$ for $L^H$ is at least $l$.
				\end{prop}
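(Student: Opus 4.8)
The plan is to produce, for each of the $l$ linearly independent closed systems, an explicit element of $\ker L^H$, and then to argue that these $l$ kernel elements are themselves linearly independent. This gives $\dim\ker L^H\ge l$, i.e. the eigenvalue $0$ of $L^H$ has multiplicity at least $l$; since by Corollary \ref{cor1} the eigenvalues of $L^H$ are non-negative and are listed in decreasing order, this is exactly the statement $\mu_M^H=\ldots=\mu_{M+1-l}^H=0$.

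First I would fix a reference orientation for each hyperedge (the given one, playing the role of ``$+$''), so that functions $\gamma:H\to\mathbb{R}$ satisfying \eqref{or} form an $M$-dimensional space on which $L^H$ acts. Label the $l$ linearly independent closed systems $\mathcal{S}_{i}=(V_i',H_i')$ for $i$ in an index set $I$ with $|I|=l$, and for each of them define $\gamma_i$ by $\gamma_i(h):=1$ for $h\in H_i'$ and $\gamma_i(h):=0$ otherwise (extended to the opposite orientations by \eqref{or}). Exactly as in the proof of Lemma \ref{lemmaclosedsys}, the defining property of a closed system — each vertex of $V_i'$ occurs as often as input as as output among the hyperedges of $H_i'$ — shows that $\gamma_i$ satisfies \eqref{mult0LH} at every vertex, so by \eqref{pos2} we get $\gamma_i\in\ker L^H$.

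Next I would verify that $\{\gamma_i\}_{i\in I}$ is linearly independent. Under the identification of a function $\gamma:H\to\mathbb{R}$ with its vector of values on the reference orientations, $\gamma_i$ is precisely the $i$-th row $A_i$ of the matrix $A$, since by definition $a_{ij}=1$ exactly when $h_j\in\mathcal{S}_i$. Hence the $\gamma_i$ are linearly independent if and only if the rows $\{A_i\}_{i\in I}$ are, which is exactly the hypothesis that the closed systems $\{\mathcal{S}_i\}_{i\in I}$ are linearly independent. Therefore $\ker L^H$ contains an $l$-dimensional subspace, which proves the proposition.

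The argument is essentially bookkeeping, and I expect the only point needing care to be the consistent handling of orientations: one must check that the value $1$ assigned to a hyperedge $h\in H_i'$ always refers to the same (fixed reference) orientation across all systems, so that the identification $\gamma_i\leftrightarrow A_i$ is legitimate and all the $\gamma_i$ genuinely live in the single $M$-dimensional space on which $L^H$ operates. Note that, unlike in the remark following Lemma \ref{lemmaclosedsys}, the systems need not be pairwise disjoint: linear independence of the rows of $A$ is precisely what guarantees that no non-trivial linear combination of the $\gamma_i$ vanishes, and $\ker L^H$, being a linear subspace, automatically contains all such combinations.
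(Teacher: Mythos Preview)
Your proposal is correct and follows essentially the same approach as the paper: define the indicator functions $\gamma_i$ of the closed systems, observe that each satisfies \eqref{mult0LH} and hence lies in $\ker L^H$, and then identify each $\gamma_i$ with the row $A_i$ of the matrix $A$ so that linear independence of the $\gamma_i$ is exactly the hypothesis. Your additional remarks on orientation bookkeeping and on why disjointness is not needed are fine but not required for the argument.
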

				\begin{proof}
					Let $h_1\ldots,h_M$ be the hyperedges of $\Gamma$. If $\mathcal{S}_1,\ldots,\mathcal{S}_l$ are linearly idependent closed systems, it means that the rows of the $l\times M$ matrix $A=(a_{ij})_{ij}$ such that
					\begin{equation*} 
					a_{ij}:=\begin{cases} 1 & \text{if }h_j\in\mathcal{S}_i\\ 0 & \text{otherwise} \end{cases} 
					\end{equation*}are linearly independent. Therefore, the functions $\gamma_i:H\rightarrow\mathbb{R}$ defined as $\gamma_i(h_j):=a_{ij}$ for each $i\in\{1,\ldots,l\}$ and each $j\in\{1,\ldots,M\}$ are linearly independent. Also, they all satisfy (\ref{mult0LH}). Therefore
					\begin{equation*}
					\mu_M^H=\ldots=\mu_{M+1-l}^H=0,
					\end{equation*}i.e. the multiplicity of the eigenvalue $0$ for $L^H$ is at least $l$.
				\end{proof}
				\begin{cor}
					If $\Gamma$ has $k$ pairwise disjoint closed systems, then
					\begin{equation*}
					\mu_M^H=\ldots=\mu_{M+1-k}^H=0,
					\end{equation*}i.e. the multiplicity of the eigenvalue $0$ for $L^H$ is at least $k$.
				\end{cor}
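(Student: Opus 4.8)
The plan is to deduce this directly from Proposition \ref{propindclosed}, by showing that $k$ pairwise disjoint closed systems are automatically $k$ \emph{linearly independent} closed systems; this is precisely the content of the Remark following the definition of linear independence for closed systems. Concretely, I would first let $\mathcal{S}_1=(V_1',H_1'),\ldots,\mathcal{S}_k=(V_k',H_k')$ be pairwise disjoint closed systems, so that $H_i'\cap H_j'=\emptyset$ whenever $i\neq j$, and observe that then the rows $A_1,\ldots,A_k$ of the associated $k\times M$ matrix $A$ have pairwise disjoint supports: the support of $A_i$ is exactly the set of indices $j$ with $h_j\in H_i'$.

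Next I would note that nonzero vectors with pairwise disjoint supports are linearly independent — evaluating any vanishing linear combination $\sum_i c_i A_i=0$ at a coordinate lying in the support of $A_i$ forces $c_i=0$. Hence the $A_i$ are linearly independent, i.e. $\mathcal{S}_1,\ldots,\mathcal{S}_k$ are linearly independent closed systems in the sense of the definition, and Proposition \ref{propindclosed} with $l=k$ yields $\mu_M^H=\ldots=\mu_{M+1-k}^H=0$, i.e. the multiplicity of the eigenvalue $0$ for $L^H$ is at least $k$.

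There is no real obstacle here; the only point requiring attention is that disjointness must hold for the hyperedge sets (not merely the vertex sets), which is exactly how disjoint closed systems were defined, so common vertices cause no trouble. Alternatively, one could bypass the matrix language and argue directly: by the proof of Lemma \ref{lemmaclosedsys}, the indicator function $\gamma_i$ of $H_i'$ satisfies \eqref{mult0LH} for each $i$, and since the $H_i'$ are pairwise disjoint, the $\gamma_i$ have pairwise disjoint supports, hence are pairwise orthogonal with respect to $(\cdot,\cdot)_H$; thus they span a $k$-dimensional subspace of $\ker L^H$, giving multiplicity at least $k$.
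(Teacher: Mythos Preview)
Your proposal is correct and follows essentially the same approach as the paper: reduce to Proposition \ref{propindclosed} via the fact (stated as a Remark after the definition of linear independence) that pairwise disjoint closed systems are linearly independent. The paper's proof is a single sentence citing exactly this, whereas you spell out the disjoint-support argument explicitly and also offer the equivalent orthogonality formulation; both are fine elaborations of the same idea.
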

				\begin{proof}
					The claim follows from Prop. \ref{propindclosed} and from the fact that, if $\mathcal{S}_1,\ldots,\mathcal{S}_k$ are pairwise disjoint closed systems, then they are also linearly independent.
				\end{proof}
				\subsection{Independent hyperedges and independent vertices}
		We end the section about the eigenvalue $0$ by giving, with Prop. \ref{propker}, another characterization of $m_V$ and $m_H$. Before, we define the \emph{incidence matrix} of a hypergraph and we define \emph{linearly independence} for both hyperedges and vertices. 
\begin{definition}
	Let $\Gamma=(V,H)$ be a hypergraph with $N$ vertices $v_1,\ldots,v_N$ and $M$ hyperedges $h_1,\ldots,h_M$. We define the $N\times M$ \emph{incidence matrix} of $\Gamma$ as $\mathcal{I}:=(\mathcal{I}_{ij})_{ij}$, where
	\begin{equation*} 
	\mathcal{I}_{ij}:=\begin{cases} 1 & \text{if $v_i$ is an input and not an output of }h_j\\ -1 & \text{if $v_i$ is an output and not an input of }h_j\\ 0 & \text{otherwise.} \end{cases} 
	\end{equation*}Therefore each row $\mathcal{I}_i$ of $\mathcal{I}$ represents a vertex $v_i$ and each column $\mathcal{I}^j$ of $\mathcal{I}$ represents a hyperedge $h_j$.
\end{definition}
\begin{definition}
	Given $J\subseteq\{1,\ldots,M\}$, we say that the hyperedges $\{h_j\}_{j\in J}$ are \emph{linearly independent} if the corresponding columns in the incidence matrix are linearly independent, that is, if $\{\mathcal{I}^j\}_{j\in J}$ are linearly independent. Analogously, given $I\subseteq\{1,\ldots,N\}$, we say that the vertices $\{v_i\}_{i\in I}$ are \emph{linearly independent} if the corresponding rows in the incidence matrix are linearly independent, that is, if $\{\mathcal{I}_i\}_{i\in I}$ are linearly independent.
\end{definition}
\begin{rmk}
	Linear dependence of hyperedges means the following: we see each hyperedge as the sum of all its inputs minus the sum of all its outputs (and we can forget about the catalysts). If a hyperedge can be written as a linear combination of the other ones, with coefficients in $\mathbb{R}$, we talk about linear dependence. Analogously, in order to talk about linear dependence of vertices, we see each vertex as the sum of all the hyperedges in which it is an input minus the sum of all the hyperedges in which it is an output (and we can forget the hyperedges in which it is a catalyst).
\end{rmk}
\begin{prop}\label{propker}
	\begin{equation*}
	\dim(\ker \mathcal{I})=m_H \qquad \text{ and }\qquad\dim(\ker \mathcal{I}^\top)=m_V.
	\end{equation*}
\end{prop}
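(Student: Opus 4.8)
The plan is to reduce both $\ker L^V$ and $\ker L^H$ to kernels of the incidence matrix $\mathcal I$, using the two identities \eqref{pos1} and \eqref{pos2} established in the proof of Lemma \ref{lem2}.

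The only computation needed is the observation that, once the chosen orientation of each hyperedge is fixed, the boundary operator $\delta$ is represented, in the standard bases of $\{f:V\to\mathbb R\}$ and $\{\gamma:H\to\mathbb R\}$, by the transpose $\mathcal I^\top$, and $\delta^*$ by $D^{-1}\mathcal I$, where $D=\operatorname{diag}(\deg v_1,\dots,\deg v_N)$. Indeed, in $\delta f(h_j)=\sum_{v_i\text{ input of }h_j}f(v_i)-\sum_{w\text{ output of }h_j}f(w)$, a catalyst $v_i$ of $h_j$ contributes $f(v_i)-f(v_i)=0$, exactly as prescribed by $\mathcal I_{ij}=0$, while a pure input contributes $+f(v_i)=\mathcal I_{ij}f(v_i)$ and a pure output $-f(v_i)=\mathcal I_{ij}f(v_i)$; hence $\delta f(h_j)=\sum_i\mathcal I_{ij}f(v_i)=(\mathcal I^\top f)_j$. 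The same cancellation gives $\delta^*(\gamma)(v_i)=\tfrac1{\deg v_i}\sum_j\mathcal I_{ij}\gamma(h_j)=\tfrac1{\deg v_i}(\mathcal I\gamma)_i$.

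With this in hand I would argue directly. By \eqref{pos1}, $L^Vf=0$ forces $(\delta f,\delta f)_H=0$ and hence $\delta f=0$; conversely $\delta f=0$ gives $L^Vf=\delta^*\delta f=0$. So the $0$-eigenspace of $L^V$ is $\{f:\delta f=0\}=\ker\mathcal I^\top$, i.e. $m_V=\dim\ker\mathcal I^\top$. Symmetrically, \eqref{pos2} gives $L^H\gamma=0\iff\delta^*\gamma=0$, and since every $\deg v_i>0$ the identity $\delta^*(\gamma)(v_i)=\tfrac1{\deg v_i}(\mathcal I\gamma)_i$ shows $\delta^*\gamma=0\iff\mathcal I\gamma=0$; hence the $0$-eigenspace of $L^H$ is $\ker\mathcal I$, i.e. $m_H=\dim\ker\mathcal I$.

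There is essentially no obstacle beyond this bookkeeping: the content is entirely the catalyst‑cancellation identity $\delta=\mathcal I^\top$ in coordinates, after which the claim is immediate from the positive‑semidefiniteness already proved. Equivalently, one may observe that $L^V=D^{-1}\mathcal I\mathcal I^\top$ and $L^H=\mathcal I^\top D^{-1}\mathcal I$ and invoke the standard real‑linear‑algebra facts $\ker(BB^\top)=\ker B^\top$ and $\ker(B^\top P B)=\ker B$ for $P$ positive definite (here $P=D^{-1}$), which deliver both equalities simultaneously.
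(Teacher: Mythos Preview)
Your proof is correct and follows essentially the same route as the paper: both arguments identify the $0$-eigenspaces of $L^V$ and $L^H$ with the solution sets of the balance conditions \eqref{eigenf0} and \eqref{mult0LH}, and then observe that these conditions are exactly $\mathcal I^\top f=0$ and $\mathcal I\gamma=0$, respectively. Your presentation is slightly more explicit in writing $\delta=\mathcal I^\top$ and $\delta^*=D^{-1}\mathcal I$ and in invoking \eqref{pos1}--\eqref{pos2} directly, but the substance is the same.
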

\begin{proof}
	Let's first observe that we can see a function $\gamma:H\rightarrow\mathbb{R}$ as a vector $(\gamma_1,\ldots,\gamma_M)\in\mathbb{R}^M$ such that $\gamma_j=\gamma(h_j)$. Also, two such functions are linearly independent if and only if the corresponding vectors are linearly independent. Now,
	\begin{align*}
	\mathcal{I}\cdot\begin{pmatrix} \gamma_1 \\ \vdots \\ \gamma_M \end{pmatrix}=\mathbf{0} &\iff \sum_{j=1}^M\mathcal{I}_{ij}\cdot\gamma_j=0\qquad \forall i\in\{1,\ldots,N\}\\
	&\iff \sum_{j_{\text{in}}: i\text{ input of }h_{j_{\text{in}}}}\gamma_{j_{\text{in}}}=\sum_{j_{\text{out}}: i\text{ output of }h_{j_{\text{out}}}}\gamma_{j_{\text{out}}} \qquad \forall i\in\{1,\ldots,N\}\\
	&\iff \gamma \text{ satisfies } (\ref{mult0LH})\\
	&\iff \gamma \text{ is an eigenfunction of $L^H$ with eigenvalue }0.
	\end{align*}Therefore
	\begin{equation*}
	\dim(\ker \mathcal{I})=m_H.
	\end{equation*}With an analogous proof, one can see that
	\begin{equation*}
	\dim(\ker \mathcal{I}^\top)=m_V.
	\end{equation*}
\end{proof}We shall now see four corollaries of Prop. \ref{propker}.
\begin{cor}
	$m_H$ and $m_V$ don't change if we replace a hyperedge $h$ containing a catalyst $v$ by the new hyperedge $h\setminus \{v\}$.
\end{cor}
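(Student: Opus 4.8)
The plan is to reduce everything to Proposition \ref{propker}, which says $m_H=\dim(\ker\mathcal{I})$ and $m_V=\dim(\ker\mathcal{I}^\top)$. So it suffices to show that the incidence matrix $\mathcal{I}$ is \emph{literally unchanged}, as a matrix, when we replace the hyperedge $h$ by $h':=h\setminus\{v\}$. Note this is not completely obvious a priori, because removing a catalyst from a hyperedge does change $\deg v$, hence changes the scalar product $(\cdot,\cdot)_V$ and the operators $L^V,L^H$ themselves; the point is that the kernel dimensions singled out by Prop. \ref{propker} do not see this change.

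The key observation I would record first is a feature of the incidence matrix already visible in its definition: $\mathcal{I}_{ij}=0$ whenever $v_i$ is a catalyst of $h_j$, since then $v_i$ is simultaneously an input and an output of $h_j$ and neither of the two nonzero cases applies. In particular, the entry of the column $\mathcal{I}^h$ in the row indexed by $v$ is already $0$ before the modification.

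Then I would compare the columns $\mathcal{I}^h$ and $\mathcal{I}^{h'}$ row by row. In the row indexed by $v$: the entry is $0$ before (because $v$ is a catalyst of $h$) and $0$ after (because $v\notin h'$). In the row indexed by any vertex $w\neq v$: deleting $v$ from $h$ does not affect whether $w$ is an input, an output, a catalyst, or absent, so that entry is unchanged. Hence $\mathcal{I}^h=\mathcal{I}^{h'}$, all other columns are untouched, and $\mathcal{I}$ is the same matrix before and after; Prop. \ref{propker} then gives $m_H$ and $m_V$ unchanged. The only thing one should verify is the mild sanity condition that $h'$ is still a legitimate hyperedge, i.e. it retains at least one input and at least one output; this holds as long as $v$ is not the unique input or unique output of $h$, which we may assume, since otherwise the replacement is not even defined. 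This bookkeeping is routine, so there is no real obstacle here — the substance is entirely the remark that catalysts contribute $0$ to the incidence matrix.
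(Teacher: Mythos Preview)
Your proof is correct and follows exactly the paper's approach: the paper's proof is the one-liner ``It follows from Prop.~\ref{propker} and by the definition of $\mathcal{I}$,'' and you have simply spelled out the content of that sentence, namely that a catalyst contributes a $0$ entry to the incidence matrix, so removing it leaves $\mathcal{I}$ unchanged.
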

\begin{proof}
	It follows from Prop. \ref{propker} and by the definition of $\mathcal{I}$.
\end{proof}
\begin{cor}\label{corlindep}
	If there are linearly dependent hyperedges, then $m_H>0$. If there are linearly dependent vertices, then $m_V>0$.
\end{cor}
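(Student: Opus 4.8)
The statement to prove is Corollary~\ref{corlindep}: if there are linearly dependent hyperedges then $m_H>0$, and if there are linearly dependent vertices then $m_V>0$.

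\medskip

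The plan is to deduce this directly from Proposition~\ref{propker}, which identifies $m_H=\dim(\ker\mathcal{I})$ and $m_V=\dim(\ker\mathcal{I}^\top)$. First I would handle the hyperedge case. Suppose the columns $\{\mathcal{I}^j\}_{j\in J}$ of the incidence matrix are linearly dependent for some $J\subseteq\{1,\ldots,M\}$; by definition this means the hyperedges $\{h_j\}_{j\in J}$ are linearly dependent. Then there exist coefficients $c_j\in\mathbb{R}$, $j\in J$, not all zero, with $\sum_{j\in J}c_j\,\mathcal{I}^j=\mathbf{0}$. Extending by $c_j:=0$ for $j\notin J$ gives a nonzero vector $(c_1,\ldots,c_M)\in\ker\mathcal{I}$, so $\dim(\ker\mathcal{I})\geq 1$, i.e. $m_H>0$. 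The vertex case is entirely symmetric: linear dependence of vertices means some rows $\{\mathcal{I}_i\}_{i\in I}$ are linearly dependent, equivalently the corresponding columns of $\mathcal{I}^\top$ are linearly dependent, which produces a nonzero element of $\ker\mathcal{I}^\top$, hence $m_V=\dim(\ker\mathcal{I}^\top)>0$.

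\medskip

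There is essentially no obstacle here: the work has already been done in Proposition~\ref{propker}, and the only thing to observe is the elementary linear-algebra fact that a matrix with linearly dependent columns has nontrivial kernel (and dually for rows, via the transpose). So the proof is just a matter of unwinding the definition of linear independence of hyperedges/vertices and invoking Proposition~\ref{propker}. One could even phrase it in one line for each case. For concreteness, the write-up would be:

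\begin{proof}
If the hyperedges $\{h_j\}_{j\in J}$ are linearly dependent, then by definition the columns $\{\mathcal{I}^j\}_{j\in J}$ of $\mathcal{I}$ are linearly dependent, so there is a nonzero $\gamma=(\gamma_1,\ldots,\gamma_M)\in\mathbb{R}^M$ with $\mathcal{I}\gamma=\mathbf{0}$; thus $\dim(\ker\mathcal{I})\geq 1$, and by Prop.~\ref{propker} we get $m_H>0$. Similarly, if the vertices $\{v_i\}_{i\in I}$ are linearly dependent, then the rows $\{\mathcal{I}_i\}_{i\in I}$ of $\mathcal{I}$, equivalently the columns $\{(\mathcal{I}^\top)^i\}_{i\in I}$ of $\mathcal{I}^\top$, are linearly dependent, so $\dim(\ker\mathcal{I}^\top)\geq 1$, and by Prop.~\ref{propker} we get $m_V>0$.
\end{proof}
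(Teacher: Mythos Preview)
Your proof is correct and matches the paper's approach: the paper states this corollary without proof, treating it as an immediate consequence of Proposition~\ref{propker}, which is precisely what you do. Your write-up simply makes explicit the one-line linear-algebra observation that linearly dependent columns (resp.\ rows) force a nontrivial kernel of $\mathcal{I}$ (resp.\ $\mathcal{I}^\top$).
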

\begin{cor}\label{cormH}
	\begin{equation*}
	m_H=M-\text{maximum number of linearly independent hyperedges}
	\end{equation*}and
	\begin{equation*}
	m_V=N-\text{maximum number of linearly independent vertices}.
	\end{equation*}
\end{cor}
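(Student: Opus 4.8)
The plan is to deduce both identities directly from Proposition \ref{propker} together with the rank--nullity theorem. Viewing the incidence matrix $\mathcal{I}$ as a linear map $\mathbb{R}^M\rightarrow\mathbb{R}^N$, rank--nullity gives $M=\rank(\mathcal{I})+\dim(\ker\mathcal{I})$. Proposition \ref{propker} identifies $\dim(\ker\mathcal{I})$ with $m_H$, so $m_H=M-\rank(\mathcal{I})$. It then remains to observe that $\rank(\mathcal{I})$, being the dimension of the column space of $\mathcal{I}$, equals the maximal number of linearly independent columns of $\mathcal{I}$, which by the definition of linear independence of hyperedges is precisely the maximum number of linearly independent hyperedges. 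Combining these gives the first formula.

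For the second identity I would argue symmetrically with $\mathcal{I}^\top$, viewed as a map $\mathbb{R}^N\rightarrow\mathbb{R}^M$: rank--nullity now gives $N=\rank(\mathcal{I}^\top)+\dim(\ker\mathcal{I}^\top)$, and Proposition \ref{propker} identifies $\dim(\ker\mathcal{I}^\top)$ with $m_V$. Using that row rank equals column rank, $\rank(\mathcal{I}^\top)=\rank(\mathcal{I})$ equals the maximal number of linearly independent rows of $\mathcal{I}$, which by definition is the maximum number of linearly independent vertices; hence $m_V=N-(\text{that number})$.

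There is essentially no real obstacle here: the statement is a bookkeeping consequence of elementary linear algebra. The only point requiring a moment's care is keeping track of the domains and codomains of $\mathcal{I}$ and $\mathcal{I}^\top$, so that rank--nullity is applied with the correct ambient dimension ($M$ for the first identity, $N$ for the second), and invoking the standard fact that the rank of a matrix coincides both with its maximal number of independent columns and, via $\rank(\mathcal{I})=\rank(\mathcal{I}^\top)$, with its maximal number of independent rows.
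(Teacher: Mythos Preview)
Your proposal is correct and follows exactly the paper's own approach: the paper's proof simply says ``It follows by Prop.~\ref{propker} and by the Rank--Nullity Theorem,'' and you have spelled out precisely that argument in detail.
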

\begin{proof}
	It follows by Prop. \ref{propker} and by the Rank-Nullity Theorem.
\end{proof}
\begin{cor}
	In the case of graphs, 
	\begin{enumerate}
		\item Edges are linearly dependent if and only if they form at least one cycle;
		\item Vertices are linearly dependent if and only if they cover at least one connected component of the graph.
	\end{enumerate}
\end{cor}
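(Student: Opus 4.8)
The plan is to prove both biconditionals by converting ``linear dependence of columns (resp.\ rows) of $\mathcal{I}$'' into the kernel equations \eqref{mult0LH} and \eqref{eigenf0}, exactly as in the proof of Proposition \ref{propker}, and then invoking two standard structural facts about graphs: a forest with an edge has a leaf, and a real function on the vertices whose difference vanishes along every edge is constant on each connected component.

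For part (1), direction ($\Leftarrow$): if the chosen set of edges contains the edge set of a cycle $w_1,w_2,\dots,w_k,w_1$, I define $\gamma$ to be $0$ off the cycle and, on the $l$-th cycle edge, $+1$ or $-1$ according to whether its orientation agrees with the traversal $w_l\to w_{l+1}$ (a single self-loop is trivial, its column being zero). At each $w_m$ the two incident cycle edges contribute $+1$ and $-1$ to $\sum_j\gamma_j\mathcal{I}^j$, so this combination vanishes and the columns are linearly dependent. Direction ($\Rightarrow$): a nontrivial dependence $(\gamma_j)_{j\in J}$, extended by $0$, satisfies \eqref{mult0LH} at every vertex by the computation in Proposition \ref{propker}. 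Let $J'\subseteq J$ be its support; I claim the subgraph $G'$ with edge set $\{h_j:j\in J'\}$ contains a cycle. If not, $G'$ is a forest with at least one edge, hence has a vertex $v$ of degree one in $G'$; since all edges outside $J'$ carry $\gamma$-value $0$, \eqref{mult0LH} at $v$ collapses to $\pm\gamma_{j_0}=0$ for the unique $G'$-edge $j_0$ at $v$, contradicting $j_0\in J'$. So a cycle sits among the edges of $J'\subseteq J$.

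For part (2), direction ($\Leftarrow$): if $\{v_i\}_{i\in I}$ contains every vertex of a connected component $C$, I sum the rows $\mathcal{I}_i$ over $v_i\in C$. Every edge either has both endpoints in $C$ — contributing $(+1)+(-1)=0$, or $0$ if it is a self-loop, hence a catalyst — or has no endpoint in $C$, contributing $0$; thus the sum is the zero vector, a nontrivial dependence among a subset of $\{\mathcal{I}_i\}_{i\in I}$. Direction ($\Rightarrow$): a nontrivial dependence $(c_i)_{i\in I}$, extended by $0$ to $c:V\to\mathbb{R}$, satisfies \eqref{eigenf0} (the $\mathcal{I}^\top$-analogue of the Proposition \ref{propker} computation), which for a graph means $c(v_0)=c(v_1)$ along each edge $[v_0,v_1]$; hence $c$ is constant on each connected component. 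Since $c\neq0$, it is a nonzero constant on some component $C$, so $c$ is nowhere zero on $C$; as $c$ vanishes off $I$, this forces $C\subseteq I$, i.e.\ $\{v_i\}_{i\in I}$ covers $C$.

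The one delicate point, and thus the main obstacle, is the ($\Rightarrow$) direction of (1): one must apply the leaf-peeling to the support subgraph $G'$ rather than to all of $J$, and must remember that a self-loop already counts as a cycle, so that ``$G'$ has no cycle'' legitimately gives ``$G'$ is a forest''. Everything else is a routine unwinding of Proposition \ref{propker}.
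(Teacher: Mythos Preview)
Your proof is correct. The $(\Leftarrow)$ directions in both parts are essentially the paper's own arguments. For the $(\Rightarrow)$ directions, however, you take a genuinely different route. The paper passes to the subgraph $\Gamma'$ spanned by the chosen edges (resp.\ vertices), invokes Corollary \ref{corlindep} to get $m'_H>0$ (resp.\ $m'_V>0$), and then appeals to the classical graph-theoretic facts---stated in the introduction but not proved in the paper---that for graphs $m_H$ equals the number of independent cycles and $m_V$ equals the number of connected components. You instead give direct, self-contained arguments: leaf-peeling on the support subgraph for (1), and the constancy on components of solutions to \eqref{eigenf0} for (2). Your approach is more elementary and avoids any appeal to those external spectral characterizations; the paper's approach is shorter but leans on them as black boxes. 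Your care in (1) about working on the support $J'$ rather than all of $J$, and about self-loops already being cycles (so that ``no cycle'' really yields a forest), is exactly right and in fact sharper than the paper's somewhat terse treatment.
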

\begin{proof}
	\begin{enumerate}
		In order to prove (1), assume first that a set of edges forms a cycle given by \begin{equation*}
		e_1=[v_1,v_2], e_2=[v_2,v_3],\ldots, e_k=[v_k,v_1].
		\end{equation*}Then, if we consider the corresponding columns of the incidence matrix, it's clear that
		\begin{equation*}
		\mathcal{I}^1+\mathcal{I}^2+\ldots+\mathcal{I}^k=0.
		\end{equation*}Therefore any set of edges containing $e_1,\ldots,e_k$ is linearly dependent. Vice versa, assume that $e_1,\ldots,e_k$ are linearly dependent and let $\Gamma'$ be the graph given by these edges. Then, by Cor. \ref{corlindep}, $m'_H>0$. Since $m'_H$ is the number of cycles contained in $\Gamma'$, this implies that $e_1,\ldots,e_k$ form at least one cycle.\newline
		One can prove (2) in a similar way.
	\end{enumerate}
\end{proof}
\begin{rmk}
	Interestingly, the equation
	\begin{equation}\label{balancingequation}
	\mathcal{I}\cdot\begin{pmatrix} \gamma_1 \\ \vdots \\ \gamma_M \end{pmatrix}=\mathbf{0}
	\end{equation}for the eigenfunctions $\gamma$ of $L^H$ that have eigenvalue $0$, reminds of the \emph{metabolite balancing equation} in the \emph{metabolic pathway analysis} \cite{steady-state}. In this setting, the $v_i$'s are metabolites, the $h_j$'s are metabolic reactions and the incidence matrix $\mathcal{I}$ is replaced by the similar \emph{stoichiometric matrix}. With Equation (\ref{balancingequation}), in this case, one looks for a \emph{flux distribution} $(\gamma_1,\ldots,\gamma_M)$ such that each $\gamma_j$ describes the net rate of the corresponding reaction $h_j$ and such that, with this flux distribution, there is a balance between the metabolites which are consumed and the ones which are producted, in the overall stoichiometry. For this reason, Equation (\ref{balancingequation}) in this case is called \emph{metabolite balancing equation} and it describes the so-called \emph{pseudo steady-state}.
\end{rmk}

	\section{Applications of the Min-max Principle}\label{Section Applications of the Min-max Principle}
	In this section, we will apply the following theorem in order to get more insight about the spectra of $L^V$ and $L^H$.
	\begin{theorem}[Courant-Fischer-Weyl min-max principle]
		Let $V$ be an $N$-dimensional vector space with a positive definite scalar product $(.,.)$. Let $\mathcal{V}_k$ be the family of all $k$-dimensional subspaces of $V$. Let $A : V \rightarrow V$ be a self adjoint linear operator. Then the eigenvalues $\mu_1\geq\ldots\geq\mu_N$ of $A$ can be obtained by
		\begin{equation*}
		\mu_k=\min_{V_{N-k+1}\in\mathcal{V}_{N-k+1}}\max_{g(\neq 0)\in V_{N-k+1}}\frac{(Ag,g)}{(g,g)}=\max_{V_k\in\mathcal{V}_k}\min_{g(\neq 0)\in V_k}\frac{(Ag,g)}{(g,g)}.
		\end{equation*}
	The vectors $g_k$ realizing such a min-max or max-min then are corresponding eigenvectors, and the min-max spaces $\mathcal{V}_{N-k+1}$ are spanned by the eigenvectors for the eigenvalues $\mu_{N-k+1},\ldots,\mu_N$, and analogously, the max-min spaces $\mathcal{V}_k$ are spanned by the eigenvectors for the eigenvalues $\mu_1,\ldots,\mu_{N-k+1}$. Thus, we also have
	\begin{equation}\label{eqminmax}
	\mu_k=\min_{g\in V,(g,g_j)=0\text{ for }j=k+1,\ldots,N}\frac{(Ag,g)}{(g,g)}=\max_{g\in V,(g,g_l)=0\text{ for }l=1,\ldots,k-1}\frac{(Ag,g)}{(g,g)}.
	\end{equation}
In particular,
		\begin{equation*}
		\mu_1=\max_{g\in V}\frac{(Ag,g)}{(g,g)},\qquad \mu_N=\min_{g\in V}\frac{(Ag,g)}{(g,g)}.
		\end{equation*}
	\end{theorem}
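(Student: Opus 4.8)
The plan is to reduce everything to the spectral theorem and then to an elementary dimension count. First I would invoke the spectral theorem for a self-adjoint operator on a finite-dimensional space with a positive definite scalar product: there is an orthonormal basis $g_1,\dots,g_N$ of $V$ with $Ag_i=\mu_i g_i$ and $\mu_1\ge\dots\ge\mu_N$. Writing an arbitrary $g=\sum_i c_i g_i$, one computes $(Ag,g)=\sum_i \mu_i c_i^2$ and $(g,g)=\sum_i c_i^2$, so the Rayleigh quotient $R(g):=(Ag,g)/(g,g)$ is a convex combination of the $\mu_i$ with weights $c_i^2/\sum_j c_j^2$. This single observation does almost all the work: it gives $\mu_N\le R(g)\le\mu_1$ for every $g\neq 0$; it shows that $R$ restricted to $\mathrm{span}(g_1,\dots,g_k)$ takes values in $[\mu_k,\mu_1]$, and that $R$ restricted to $\mathrm{span}(g_k,\dots,g_N)$ takes values in $[\mu_N,\mu_k]$; and in both cases the value $\mu_k$ is attained, namely at $g_k$.

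Next I would prove the max-min formula. For the lower bound $\mu_k\le\max_{V_k}\min_{g\neq 0\in V_k}R(g)$, I use the explicit test space $W_k:=\mathrm{span}(g_1,\dots,g_k)$, on which $\min R=\mu_k$ by the weighted-average remark. For the upper bound, let $V_k$ be any $k$-dimensional subspace; since $\dim V_k+\dim\mathrm{span}(g_k,\dots,g_N)=k+(N-k+1)=N+1>N$, these subspaces meet in a nonzero vector $g_\ast$, and $R(g_\ast)\le\mu_k$ because $g_\ast\in\mathrm{span}(g_k,\dots,g_N)$; hence $\min_{g\neq 0\in V_k}R(g)\le\mu_k$ for every such $V_k$. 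Combining the two bounds yields the max-min identity. The min-max identity I would obtain by the mirror-image argument (an arbitrary $(N-k+1)$-dimensional subspace meets $\mathrm{span}(g_1,\dots,g_k)$ nontrivially, and $\mathrm{span}(g_k,\dots,g_N)$ serves as the optimal test space), or, more compactly, by applying the max-min identity just proved to $-A$, whose ordered eigenvalues are $-\mu_N\ge\dots\ge-\mu_1$.

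Finally I would address the claims about which vectors and subspaces realize the extrema. If $g\in W_k=\mathrm{span}(g_1,\dots,g_k)$ attains $\min_{g\neq 0\in W_k}R(g)=\mu_k$, then equality in the weighted-average estimate forces $c_i=0$ whenever $\mu_i>\mu_k$, so $g$ lies in the $\mu_k$-eigenspace and is a corresponding eigenvector; the assertions about the min-max and max-min spaces then follow by induction on $k$, peeling off one eigenvector at a time, which is precisely the reformulation \eqref{eqminmax} with orthogonality constraints against the previously chosen $g_j$. The cases $k=1$ and $k=N$ are immediate specializations. The only delicate point is the treatment of repeated eigenvalues in this last step: there the eigenvector realizing an extremum is determined only up to the choice of orthonormal basis of the relevant eigenspace, and the span assertions must be phrased accordingly. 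I expect this bookkeeping with multiplicities, rather than any genuine inequality, to be the main nuisance; the heart of the argument is the two-line intersection-by-dimension-count step.
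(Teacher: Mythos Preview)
Your argument is the standard and correct proof of the Courant--Fischer--Weyl principle: spectral theorem, Rayleigh quotient as a weighted average of the $\mu_i$, and the dimension-count intersection $\dim V_k + \dim\mathrm{span}(g_k,\dots,g_N) = N+1 > N$ to bound the inner extremum over an arbitrary subspace. The duality via $-A$ is also fine, and your remarks on repeated eigenvalues correctly identify the only subtlety in the ``optimal subspace'' assertions.

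There is, however, nothing to compare against: the paper does not prove this theorem. It is stated at the opening of Section~\ref{Section Applications of the Min-max Principle} as a classical result to be \emph{applied}, not established, and no proof or sketch is given anywhere in the text. So your write-up goes beyond what the paper does; if your goal is only to match the paper, a citation would suffice.
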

	\begin{definition}
		 $\frac{(Ag,g)}{(g,g)}$ is called the \emph{Rayleigh quotient} of $g$.
	\end{definition}
	\begin{rmk}
		Without loss of generality, we may assume $(g,g)=1$ in (\ref{eqminmax}).
	\end{rmk}
	\subsection{Largest eigenvalue}\label{Largest eigenvalue}
	Since $L^V$ and $L^H$ are self-adjoint operators, we can apply the Courant-Fischer-Weyl min-max Principle and find, in particular, two alternative ways of computing $\mu_1$:
	\begin{enumerate}
		\item\label{lambda_N^V} \begin{align*}
		\mu_1&=\max_{f}\frac{(\delta f,\delta f)_H}{(f,f)_V}\\
		&=\max_{f}\frac{\sum_{h\in H}\delta f(h)^2}{\sum_{v\in V}\deg v\cdot f(v)^2}\\
		&=\max_{f:\sum_{v\in V}\deg v\cdot f(v)^2=1}\sum_{h\in H}\delta f(h)^2\\
		&=\max_{f:\sum_{v\in V}\deg v\cdot f(v)^2=1}\sum_{h\in H}\biggl(\sum_{v_i \text{ input of }h}f(v_i)-\sum_{v^j \text{ output of }h}f(v^j)\biggr)^2
		\end{align*}and
		\item\label{lambda_N^H} \begin{align*}
		\mu_1&=\max_{\gamma}\frac{(\delta^* \gamma,\delta^* \gamma)_V}{(\gamma,\gamma)_H}\\
		&=\max_{\gamma}\frac{\sum_{v\in V}\deg v\cdot \delta^* \gamma(v)^2}{\sum_{h\in H}\gamma(h)^2}\\
		&=\max_{\gamma:\sum_{h\in H}\gamma(h)^2=1}\sum_{v\in V}\deg v\cdot \delta^* \gamma(v)^2\\
		&=\max_{\gamma:\sum_{h\in H}\gamma(h)^2=1}\sum_{v\in V}\frac{1}{\deg v}\cdot \biggl(\sum_{h_{\text{in}}: v\text{ input}}\gamma(h_{\text{in}})-\sum_{h_{\text{out}}: v\text{ output}}\gamma(h_{\text{out}})\biggr)^2.
		\end{align*}
		\end{enumerate}
		\begin{ex}\label{ex1}
			Consider a hypergraph with only one hyperedge $h$ that involves $N$ vertices: $k$ inputs and $m$ outputs, with $N\leq k+m\leq 2N$, so that there are $k+m-N$ catalysts. Then
			\begin{align*}
			\mu_1&=\max_{\gamma:\gamma(h)^2=1}\sum_{v\in V} \biggl(\sum_{h: v\text{ input}}\gamma(h)-\sum_{h: v\text{ output}}\gamma(h)\biggr)^2\\
			&=\big|\text{inputs that are not outputs}\big|+\big|\text{outputs that are not inputs}\big|\\
			&=\big|\text{inputs}\big|+\big|\text{outputs}\big|-2\cdot\big|\text{catalysts}\big|\\
			&=k+m-2k-2m+2N\\
			&=2N-k-m.
			\end{align*}In particular, this is the only eigenvalue of $L^H$. Observe that $\mu_1$ is equal to $0$ if and only if $2N=k+m$, i.e. if and only if all vertices are catalysts, while $\mu_1$ achieves the largest value $N$ exactly when $k+m=N$, i.e. when there are no catalysts.
		\end{ex}
			\begin{rmk}
				The previous example implies that $\mu_1$ can not be bounded from above by a quantity that does not depend on the number of vertices $N$ (while, for graphs, we always have $\mu_1\leq 2$). One should also compare this with Prop. \ref{charlambdaN}. 
			\end{rmk}

				\subsubsection{Bipartite hypergraphs}
				We know that the following theorem holds for graphs:
				\begin{theorem}\label{bipartitegraph}
				Let $\Gamma$ be a graph. Then $\mu_1\leq 2$ and the equality holds if and only if $\Gamma$ is bipartite.
				\end{theorem}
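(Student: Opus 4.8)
The plan is to read $\mu_1$ off the Rayleigh-quotient formula in item (\ref{lambda_N^V}) above, specialized to a graph. Since every edge has exactly one input and one output, for $e=[v_0,v_1]$ we have $\delta f(e)=f(v_0)-f(v_1)$, so
\begin{equation*}
\mu_1=\max_{f\neq 0}\frac{\displaystyle\sum_{e=[v_0,v_1]\in E}\bigl(f(v_0)-f(v_1)\bigr)^2}{\displaystyle\sum_{v\in V}\deg v\cdot f(v)^2}.
\end{equation*}
First I would prove $\mu_1\le 2$ by applying the elementary inequality $(a-b)^2\le 2a^2+2b^2$ to each summand of the numerator and then invoking the bookkeeping identity
\begin{equation*}
\sum_{e=[v_0,v_1]\in E}\bigl(f(v_0)^2+f(v_1)^2\bigr)=\sum_{v\in V}\deg v\cdot f(v)^2,
\end{equation*}
which holds because each vertex $v$ contributes the term $f(v)^2$ exactly once for each of its $\deg v$ incident edges (a self-loop contributes $0$ to the numerator and may simply be discarded). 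Together these give numerator $\le 2\cdot$ denominator, hence $\mu_1\le2$.

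For the equality case the key point is that $(a-b)^2=2a^2+2b^2$ holds exactly when $a=-b$. Thus $\mu_1=2$ if and only if the maximum above is attained by some $f$ which is not identically zero on the vertices of positive degree and satisfies $f(v_0)=-f(v_1)$ for every edge $[v_0,v_1]$: indeed, for such an $f$ the Rayleigh quotient equals $2$, and conversely an eigenfunction for $\mu_1=2$ must turn every termwise inequality into an equality. It then remains to show that such an $f$ exists precisely when $\Gamma$ is bipartite. If $\Gamma$ is bipartite with classes $A,B$ and has an edge, then $f\equiv 1$ on $A$, $f\equiv-1$ on $B$ works and has positive denominator. Conversely, given such an $f$, along any path the values of $f$ alternate in sign with constant modulus; picking a connected component on which $f\not\equiv 0$ (one exists since the denominator is positive), the sets $\{f>0\}$ and $\{f<0\}$ partition it and every edge joins the two parts, so that component is bipartite --- and under the standing assumption that $\Gamma$ is connected with at least one edge, this means $\Gamma$ itself is bipartite, which closes the equivalence.

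The only points that require care are the vertex--edge counting identity and keeping isolated vertices (and the connectedness hypothesis) straight in the equality analysis; beyond that the argument is a direct computation, so I do not anticipate a genuine obstacle.
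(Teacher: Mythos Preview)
The paper does not prove Theorem~\ref{bipartitegraph}; it is quoted as a known fact about the normalized graph Laplacian and serves only to motivate the hypergraph generalization in Proposition~\ref{bipartitehyp}. So there is no proof in the paper to compare against.

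Your argument is the standard one and is correct. Two minor points you already anticipated are worth making precise. First, self-loops: with the paper's convention $\deg v=\big|\{h:v\in h\}\big|$, a self-loop contributes $0$ to the numerator but a positive amount to the denominator, so discarding it can only help the inequality $\mu_1\le 2$; in the equality analysis it then forces $f(v)=0$ at any self-loop vertex, consistent with the fact that a self-loop obstructs bipartiteness. Second, the connectedness hypothesis is indeed needed for the ``only if'' direction exactly as you say: without it, $\mu_1=2$ only guarantees that \emph{some} component is bipartite.
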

				\begin{recall}
				Recall that a graph is \emph{bipartite} if one can decompose the vertex set as a disjoint union $V=V_1\sqcup V_2$ such that every edge has one of its endpoints in $V_1$ and the other in $V_2$.
				\end{recall}We will now generalize the notion of bipartite graph and extend it to hypergraphs, then we will generalize Theorem \ref{bipartitegraph}.
				\begin{definition}
				We say that a hypergraph $\Gamma$ is \emph{bipartite} if one can decompose the vertex set as a disjoint union $V=V_1\sqcup V_2$ such that, for every hyperedge $h$ of $\Gamma$, either $h$ has all its inputs in $V_1$ and all its outputs in $V_2$, or vice versa.
				\end{definition}
				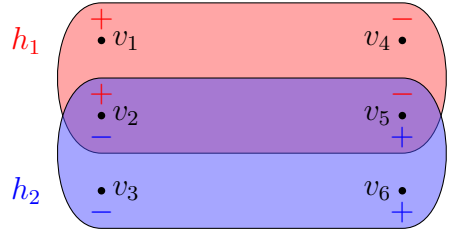
\begin{figure}[h]
					\begin{center}
\begin{tikzpicture}
\node (v3) at (1,0) {};
\node (v2) at (1,1) {};
\node (v1) at (1,2) {};
\node (v6) at (5,0) {};
\node (v5) at (5,1) {};
\node (v4) at (5,2) {};

\begin{scope}[fill opacity=0.5]
\filldraw[fill=red!70] ($(v1)+(0,0.5)$) 
to[out=180,in=180] ($(v2) + (0,-0.5)$) 
to[out=0,in=180] ($(v5) + (0,-0.5)$)
to[out=0,in=0] ($(v4) + (0,0.5)$)
to[out=180,in=0] ($(v1)+(0,0.5)$);
\filldraw[fill=blue!70] ($(v2)+(0,0.5)$) 
to[out=180,in=180] ($(v3) + (0,-0.5)$) 
to[out=0,in=180] ($(v6) + (0,-0.5)$)
to[out=0,in=0] ($(v5) + (0,0.5)$)
to[out=180,in=0] ($(v2)+(0,0.5)$);
\end{scope}

\fill (v1) circle (0.05) node [right] {$v_1$} node [above] {\color{red}$+$};
\fill (v2) circle (0.05) node [right] {$v_2$} node [above] {\color{red}$+$} node [below] {\color{blue}$-$};
\fill (v3) circle (0.05) node [right] {$v_3$} node [below] {\color{blue}$-$};
\fill (v4) circle (0.05) node [left] {$v_4$} node [above] {\color{red}$-$};
\fill (v5) circle (0.05) node [left] {$v_5$} node [above] {\color{red}$-$}node [below] {\color{blue}$+$};
\fill (v6) circle (0.05) node [left] {$v_6$} node [below] {\color{blue}$+$};

\node at (0,2) {\color{red}$h_1$};
\node at (0,0) {\color{blue}$h_2$};
\end{tikzpicture}
					\end{center}
					\caption{A bipartite hypergraph with $V_1=\{v_1,v_2,v_3\}$ and $V_2=\{v_4,v_5,v_6\}$.}\label{bipartiteh}
				\end{figure}
				\begin{rmk}
					It is clear from the definition that:\begin{itemize}
						\item If a hypergraph is bipartite it does not contain catalysts;
						\item The definition of bipartite hypergraph applied to graphs gives exactly the definition of bipartite graph that we already know.
					\end{itemize}
				\end{rmk}
				\begin{lem}\label{|h|=N}
					Let $\Gamma$ be a bipartite hypergraph. Then
					\begin{equation*}
					\mu_1\geq \frac{\sum_{h\in H}\big|h\big|^2}{\sum_{h\in H}\big|h\big|}.
					\end{equation*}
				\end{lem}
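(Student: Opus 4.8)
The plan is to use the second Rayleigh-quotient formula for $\mu_1$, namely item \eqref{lambda_N^H} above,
\begin{equation*}
\mu_1=\max_{\gamma:\sum_{h\in H}\gamma(h)^2=1}\sum_{v\in V}\frac{1}{\deg v}\cdot \biggl(\sum_{h_{\text{in}}: v\text{ input}}\gamma(h_{\text{in}})-\sum_{h_{\text{out}}: v\text{ output}}\gamma(h_{\text{out}})\biggr)^2,
\end{equation*}
and to obtain the lower bound by exhibiting one convenient test function $\gamma:H\to\mathbb{R}$. The natural guess, motivated by the balanced/unbalanced heuristic and by Example \ref{ex1}, is to put the same value on every hyperedge, say $\gamma(h)=c$ for a fixed orientation of each $h$, with $c$ chosen so that $\sum_{h\in H}\gamma(h)^2=Mc^2=1$. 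The whole point of bipartiteness is that there are no catalysts and that, once we orient every hyperedge so that its inputs lie in $V_1$ and its outputs lie in $V_2$, a vertex $v\in V_1$ is an input for \emph{every} hyperedge containing it, and a vertex $v\in V_2$ is an output for every hyperedge containing it.

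Concretely, first I would fix the bipartition $V=V_1\sqcup V_2$ and orient each hyperedge $h$ so that all of its inputs are in $V_1$ and all of its outputs are in $V_2$; this is possible by the definition of bipartite hypergraph, and it makes $\gamma$ well defined as a function on oriented hyperedges satisfying \eqref{or}. Then for $v\in V_1$ the inner bracket $\sum_{h_{\text{in}}:v\text{ input}}\gamma(h_{\text{in}})-\sum_{h_{\text{out}}:v\text{ output}}\gamma(h_{\text{out}})$ equals $c\cdot|\{h:v\in h\}| = c\deg v$, since $v$ is an input for each such $h$ and never an output; similarly for $v\in V_2$ it equals $-c\deg v$. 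In both cases the square of the bracket is $c^2(\deg v)^2$, so
\begin{equation*}
\sum_{v\in V}\frac{1}{\deg v}(\cdots)^2=\sum_{v\in V}\frac{c^2(\deg v)^2}{\deg v}=c^2\sum_{v\in V}\deg v.
\end{equation*}
Now I would rewrite $\sum_{v\in V}\deg v=\sum_{h\in H}|h|$ (double counting incidences) and $c^2=1/M=1/|H|$, and then replace $|H|$ by $\bigl(\sum_{h\in H}|h|\bigr)^2/\bigl(\sum_{h\in H}|h|^2\bigr)$ via Cauchy–Schwarz — wait, that is the wrong direction; more simply, since $\gamma\equiv c$ is just one admissible test function, I would instead choose $\gamma(h)=|h|/\sqrt{\sum_{h'\in H}|h'|^2}$, so that $\sum_h\gamma(h)^2=1$ and each bracket at $v$ becomes $\pm\deg v$ times a weighted average of the $|h|$ over $h\ni v$, giving $\mu_1\ge \sum_h|h|^2/\sum_h|h|$ directly once the incidences are recollected. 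I will pick whichever of these two test functions actually yields the stated bound cleanly; the constant test function gives $\mu_1\ge \sum_h|h|/|H|$, which is the average hyperedge size, whereas the stated bound $\sum_h|h|^2/\sum_h|h|$ is a size-weighted average, so the correct choice is $\gamma(h)$ proportional to $|h|$.

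There is no real obstacle here — the argument is a one-line substitution into the variational characterization — but the step requiring the most care is the bookkeeping of incidences and orientations: making sure that, in a bipartite hypergraph with the chosen orientation, every vertex plays a single role (always input or always output) across all hyperedges containing it, so that the inner bracket genuinely telescopes to $\pm$(sum of $\gamma$-values over incident hyperedges) with no cancellation. Once that is set up, substituting $\gamma(h)=|h|/\sqrt{\sum_{h'}|h'|^2}$, computing each squared bracket as $\bigl(\tfrac{1}{\sqrt{\sum_{h'}|h'|^2}}\sum_{h\ni v}|h|\bigr)^2$, dividing by $\deg v$, summing over $v$, and swapping the order of summation to group by hyperedge will produce $\sum_{h\in H}|h|^2\big/\sum_{h\in H}|h|$ as a lower bound for the maximum, hence for $\mu_1$.
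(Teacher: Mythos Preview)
Your setup via the $L^H$ Rayleigh quotient is fine, and the key observation---that in a bipartite hypergraph, after orienting every hyperedge from $V_1$ to $V_2$, each vertex is always an input or always an output---is exactly right. The gap is in your last step. With $\gamma(h)=|h|/\sqrt{S}$, $S=\sum_{h'}|h'|^2$, the Rayleigh quotient is
\[
\frac{1}{S}\sum_{v\in V}\frac{1}{\deg v}\Bigl(\sum_{h\ni v}|h|\Bigr)^{2},
\]
and this does \emph{not} reduce to $\sum_h|h|^2/\sum_h|h|$ by ``swapping the order of summation''. Try the hypergraph of Example~\ref{ex2,7}: your expression evaluates to $69/26\approx 2.654$, not $13/5=2.6$. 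The cross-terms $|h||h'|$ for $h,h'\ni v$ do not collapse by reindexing.

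Your approach can be rescued: set $a_v=\sum_{h\ni v}|h|$ and $d_v=\deg v$, note $\sum_v a_v=\sum_h|h|^2=S$ and $\sum_v d_v=\sum_h|h|$, and apply Cauchy--Schwarz,
\[
\Bigl(\sum_v a_v\Bigr)^2\le\Bigl(\sum_v \frac{a_v^2}{d_v}\Bigr)\Bigl(\sum_v d_v\Bigr),
\]
to get $\tfrac{1}{S}\sum_v a_v^2/d_v\ge S/\sum_h|h|$, which is the stated bound. So your test function actually gives a bound at least as strong as (and generically strictly stronger than) the one in the lemma, but you need this extra inequality.

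The paper takes the dual route, plugging into the $L^V$ Rayleigh quotient the test function $f\equiv \pm 1/\sqrt{\sum_v\deg v}$ on $V_1$ and $V_2$ respectively. Then each hyperedge contributes exactly $|h|^2/\sum_v\deg v$, and the sum is $\sum_h|h|^2/\sum_h|h|$ on the nose---no inequality needed. That choice is the cleaner one here precisely because the bipartite structure is a condition on \emph{vertices}, so the natural test function lives on $V$, not on $H$.
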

				\begin{proof}
					Since $\Gamma$ is bipartite, we can write
					\begin{align*}
					\mu_1&=\max_{f:\sum_{v\in V}\deg v\cdot f(v)^2=1}\sum_{h\in H}\biggl(\sum_{v_i \text{ input of }h}f(v_i)-\sum_{v^j \text{ output of }h}f(v^j)\biggr)^2\\
					&=\max_{f:\sum_{v\in V}\deg v\cdot f(v)^2=1}\sum_{h\in H}\biggl(\sum_{v_i \in h:f(v_i)>0}f(v_i)-\sum_{v^j \in h:f(v^j)<0}f(v^j)\biggr)^2.
					\end{align*} 
					Now, let \begin{equation*}
					f(v):=\frac{1}{\sqrt{\sum_v\deg v}}
					\end{equation*}for every $v\in V_1$ and
					\begin{equation*}
					f(w):=-\frac{1}{\sqrt{\sum_w\deg w}}
					\end{equation*}for every $w \in V_2$.
					Then
				    \begin{equation*}
					\sum_{v\in V}\deg v\cdot f(v)^2=1
					\end{equation*}and
					\begin{align*}
					&\sum_{h\in H}\biggl(\sum_{v_i \in h:f(v_i)>0}f(v_i)-\sum_{v^j \in h:f(v^j)<0}f(v^j)\biggr)^2\\&=\sum_{h\in H}\biggl(\frac{1}{\sqrt{\sum_v\deg v}}\cdot \big|h\big|\biggr)^2\\
					&=\frac{\sum_{h\in H}\big|h\big|^2}{\sum_{h\in H}\big|h\big|},
					\end{align*}where the last equality is due to the fact that $\sum_v\deg v=\sum_{h\in H}\big|h\big|$.\newline
					Therefore
						\begin{equation*}
						\mu_1\geq \frac{\sum_{h\in H}\big|h\big|^2}{\sum_{h\in H}\big|h\big|}.
						\end{equation*}
				\end{proof}
				\begin{rmk}
					The quantity 	\begin{equation*}
					h':=\frac{\sum_{h\in H}\big|h\big|^2}{\sum_{h\in H}\big|h\big|}
					\end{equation*}appearing in Lemma \ref{|h|=N} has the biggest value $N$ exactly when every $h\in H$ has the biggest possible cardinality, which is $N$.
				\end{rmk}
				\begin{rmk}
				Recall from Example \ref{ex1} that, for bipartite hypergraphs with only one hyperedge, $\mu_1=N$, therefore in this case $\mu_1=h'$.
				\end{rmk}
				\begin{rmk}
					\end{rmk}Let's apply Lemma \ref{|h|=N} to a bipartite graph $\Gamma$. Since $\big|e\big|=2$ for every edge, the lemma tells us that 
					\begin{equation*}
					\mu_1\geq\frac{\sum_{e\in E}4}{\sum_{e\in E}2}=\frac{\big|E\big|\cdot 4}{\big|E\big|\cdot 2}=2
					\end{equation*}and, as we know, this is actually an equality.
				
				\begin{prop}\label{bipartitehyp}
					Let $\Gamma$ be a hypergraph with largest eigenvalue $\mu_1$. Then
					\begin{equation*}
					\mu_1\leq \mu'_1
					\end{equation*}
                where $\mu_1'$ is the largest eigenvalue of a bipartite hypergraph that has the same number of hyperedges as $\Gamma$ and also the same number of inputs and the same number of outputs in each hyperedge (catalysts are not included).\newline
                The equality holds if and only if $\Gamma$ is bipartite.
				\end{prop}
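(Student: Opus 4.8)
The plan is to estimate the Rayleigh quotient in (\ref{lambda_N^V}) by the triangle inequality and to recognise the resulting bound as the Rayleigh quotient of an explicitly constructed bipartite hypergraph. Write $I_h$ and $O_h$ for the inputs and the outputs of $h$ that are not catalysts; since a catalyst contributes $+f(v)$ and $-f(v)$ to $\delta f(h)$, one has $\delta f(h)=\sum_{v\in I_h}f(v)-\sum_{v\in O_h}f(v)$, and hence $\delta f(h)^2\le\bigl(\sum_{v\in I_h\cup O_h}|f(v)|\bigr)^2$.

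Given $\Gamma$, I would take $\Gamma'$ to be the ``doubled'' hypergraph: its vertices are copies $(v,1),(v,2)$ of the vertices of $\Gamma$, and each $h\in H$ gives a hyperedge $h'$ with inputs $\{(v,1):v\in I_h\}$ and outputs $\{(v,2):v\in O_h\}$ (a copy $(v,i)$ being kept only if it then has positive degree). This $\Gamma'$ is bipartite with parts $V\times\{1\}$ and $V\times\{2\}$, has $M$ hyperedges, and after deletion of catalysts has the same number of inputs and of outputs in each hyperedge as $\Gamma$, so $\mu_1':=\mu_1(\Gamma')$ is admissible. For $f:V\to\mathbb R$ with $(f,f)_V=1$, set $g(v,1):=|f(v)|$ and $g(v,2):=-|f(v)|$; then $\delta g(h')=\sum_{v\in I_h\cup O_h}|f(v)|$, while $\deg_{\Gamma'}(v,1)+\deg_{\Gamma'}(v,2)$ equals the number of hyperedges in which $v$ is a non-catalyst vertex and is therefore at most $\deg_\Gamma v$, whence $(g,g)_{V'}\le(f,f)_V=1$. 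Applying the min--max principle to $L^V$ on $\Gamma'$ in the form (\ref{lambda_N^V}),
\[
\sum_{h\in H}\delta f(h)^2\ \le\ \sum_{h'}\delta g(h')^2\ \le\ \mu_1'\,(g,g)_{V'}\ \le\ \mu_1',
\]
and taking the supremum over $f$ gives $\mu_1\le\mu_1'$.

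For the equivalence, $\Longleftarrow$ is clear: a bipartite $\Gamma$ has no catalysts and has the prescribed data, hence may serve as its own bipartite comparison hypergraph and $\mu_1=\mu_1'$. For $\Longrightarrow$ I would assume $\mu_1=\mu_1'$, pick a maximiser $f$, and trace back the equalities: equality in $\sum_h\delta f(h)^2\le\sum_h(\sum_{v\in I_h\cup O_h}|f(v)|)^2$ forces, in every hyperedge $h$ with $\delta f(h)\neq0$, that the values $f(v)$ with $v\in I_h$ all have one sign and those with $v\in O_h$ the opposite sign; and equality in $(g,g)_{V'}\le(f,f)_V$ forces every $v$ with $f(v)\neq0$ to be a catalyst in no hyperedge. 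Setting $V_1=\{f>0\}$ and $V_2=\{f<0\}$ and colouring the remaining vertices consistently, component by component, then exhibits the required bipartition.

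The main obstacle is this last step: upgrading ``equality in the triangle inequalities \emph{for the chosen maximiser}'' to ``\emph{every} hyperedge of $\Gamma$ is compatible with a global two-colouring of \emph{all} its vertices''. The delicate cases are the hyperedges and vertices on which the maximiser vanishes, and vertices that are catalysts in every hyperedge containing them (which carry no spectral information and should be removed at the outset); I expect these to be handled by the component-wise arguments and normalisations already used in Section \ref{Section The eigenvalue 0}, together with the fact that passing to a connected component or deleting an always-catalyst vertex cannot decrease $\mu_1$. A further point to pin down is that the bipartite hypergraph realising $\mu_1'$ is the same object — or at least yields the same value — in both directions of the equivalence.
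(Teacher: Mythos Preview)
Your core manoeuvre---bounding each $\delta f(h)^2$ by $\bigl(\sum_{v\in I_h\cup O_h}|f(v)|\bigr)^2$ via the triangle inequality---is exactly what the paper does. The difference lies in how the resulting upper bound is interpreted. The paper never constructs a comparison hypergraph; it simply writes
\[
\mu_1\ \le\ \max_{f:\,(f,f)_V=1}\ \sum_{h\in H}\Bigl(\sum_{v\in h:\,f(v)>0}f(v)\;-\!\sum_{v\in h:\,f(v)<0}f(v)\Bigr)^2
\]
and asserts in one sentence that this becomes an equality precisely when one can take the maximiser $f$ positive on the inputs and negative on the outputs (or vice versa) in every $h$, i.e.\ when $\Gamma$ is bipartite. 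Your doubling construction is a more explicit realisation of the same bound: it turns the right-hand side into an honest Rayleigh quotient on a specific bipartite $\Gamma'$, a step the paper does not take. For the $\Longleftarrow$ direction, your worry about whether the doubled $\Gamma'$ coincides with $\Gamma$ when $\Gamma$ is already bipartite is resolved by Lemma~\ref{lemisoL^H}: reorient so that every hyperedge runs from $V_1$ to $V_2$, and then your $\Gamma'$ is isomorphic to $\Gamma$.

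The obstacles you flag in the $\Longrightarrow$ direction are genuine, and the paper does not handle them either---it simply asserts the equivalence. In fact, without a connectedness hypothesis the ``only if'' fails for both arguments: take $\Gamma$ to be the disjoint union of a bipartite piece $A$ with large $\mu_1(A)$ (e.g.\ a single hyperedge on six non-catalyst vertices, $\mu_1=6$) and a non-bipartite piece $B$ (e.g.\ a triangle); then the doubled $\Gamma'$ has $\mu_1'=\mu_1(A)=\mu_1(\Gamma)$, yet $\Gamma$ is not bipartite. So your caution is well placed: the statement needs either connectedness or a reading of $\mu_1'$ as a supremum over all admissible bipartite hypergraphs, neither of which the paper makes explicit.
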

				\begin{proof}
Let $\Gamma$ be a hypergraph with largest eigenvalue $\mu_1$. Then
				\begin{align*}
				\mu_1&=\max_{f:\sum_{v\in V}\deg v\cdot f(v)^2=1}\sum_{h\in H}\biggl(\sum_{v_i \text{ input of }h}f(v_i)-\sum_{v^j \text{ output of }h}f(v^j)\biggr)^2\\
				&\leq\max_{f:\sum_{v\in V}\deg v\cdot f(v)^2=1}\sum_{h\in H}\biggl(\sum_{v_i \in h:f(v_i)>0}f(v_i)-\sum_{v^j \in h:f(v^j)<0}f(v^j)\biggr)^2,
				\end{align*}where the last inequality is due to the fact that, for every $f$,
				\begin{align*}
				&\biggl|\sum_{v_i \text{ input of }h}f(v_i)-\sum_{v^j \text{ output of }h}f(v^j)\biggr|\\
				&\leq\biggl|\sum_{v_i \in h:f(v_i)>0}f(v_i)-\sum_{v^j \in h:f(v^j)<0}f(v^j)\biggr|.
				\end{align*}
				It is clear that the inequality for $\mu_1$ becomes a inequality if and only if, for every $h\in H$, we can let such $f$ be positive in the inputs and negative in the outputs, or vice versa. And this is possible if and only if the hypergraph is bipartite. Therefore \begin{equation*}
				\mu_1\leq \mu_1'
				\end{equation*}and the equality holds if and only if $\Gamma$ is bipartite.
					\end{proof}
					\begin{rmk}
						We can put together Lemma \ref{|h|=N} and Prop. \ref{bipartitehyp} and say that the largest value of $\mu_1$ is achieved by bipartite hypergraphs and that, in this case, $\mu_1\geq h'$. In particular, $\mu_1\geq h'$ becomes an equality for both bipartite graphs and bipartite hypergraphs with only one hyperedge. But it is in general not an equality, as proved by the next example.
					\end{rmk}
\begin{ex}\label{ex2,7}
	Let $\Gamma=(\{v_1,v_2,v_3,v_4\},\{h_1,h_2\})$ be the bipartite hypergraph such that:
	\begin{enumerate}
		\item $h_1$ has $v_1$ and $v_2$ as inputs and $v_3$ as output;
		\item $h_2$ has $v_1$ as input and $v_4$ as output.
	\end{enumerate}
			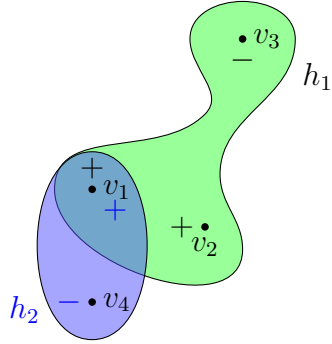
\begin{figure}[h]
				\begin{center}
					\begin{tikzpicture}
					\node (v4) at (4,2.5) {};
					\node (v1) at (2,0.5) {};
					\node (v2) at (3.5,0) {};
					\node (v3) at (2.0,-1) {};
					
					\begin{scope}[fill opacity=0.5]
					\filldraw[fill=green!80] ($(v1)+(-0.5,0)$)
					to[out=90,in=225] ($(v4)+(-0.5,-1)$)
					to[out=45,in=270] ($(v4)+(-0.7,0)$)
					to[out=90,in=180] ($(v4)+(0,0.5)$)
					to[out=0,in=90] ($(v4)+(0.7,0)$)
					to[out=270,in=90] ($(v4)+(-0.3,-1.8)$)
					to[out=270,in=90] ($(v2)+(0.5,-0.3)$)
					to[out=270,in=270] ($(v1)+(-0.5,0)$);
					\filldraw[fill=blue!70] ($(v1)+(0,0.5)$) 
					to[out=0,in=0] ($(v3) + (0,-0.5)$)
					to[out=180,in=180] ($(v1)+(0,0.5)$);
					\end{scope}
					
					\fill (v1) circle (0.05) node [right] {$v_1$} node [below right] {\color{blue}$+$} node [above] {$+$};
					\fill (v2) circle (0.05) node [below] {$v_2$} node [left] {$+$};
					\fill (v3) circle (0.05) node [right] {$v_4$} node [left] {\color{blue}$-$};
					\fill (v4) circle (0.05) node [right] {$v_3$} node [below] {$-$};
					
					\node at (1.1,-1.1) {\color{blue} $h_2$};
					\node at (5,2) {$h_1$};
					\end{tikzpicture}
				\end{center}
				\caption{The hypergraph in Example \ref{ex2,7}.}
			\end{figure}
			In this case,
			\begin{equation*}
			h'=\frac{\sum_{h\in H}\big|h\big|^2}{\sum_{h\in H}\big|h\big|}=\frac{13}{5}=2,6.
			\end{equation*}
			Now, let's compute $\mu_1$ using the Min-max Principle applied to $L^H$. For simplicity, let $\gamma(h_1):=x$ and let $\gamma(h_2):=y$. Then
			\begin{align*}
			\mu_1&=\max_{\gamma:\sum_{h\in H}\gamma(h)^2=1}\sum_{v\in V}\frac{1}{\deg v}\cdot \biggl(\sum_{h_{\text{in}}: v\text{ input}}\gamma(h_{\text{in}})-\sum_{h_{\text{out}}: v\text{ output}}\gamma(h_{\text{out}})\biggr)^2\\
			&=\max_{x,y\in\mathbb{R}:x^2+y^2=1}\biggl(x^2+x^2+\frac{(x+y)^2}{2}+y^2\biggr)\\
			&=\max_{x,y\in\mathbb{R}:x^2+y^2=1}\biggl(\frac{3}{2}+x^2+xy\biggr),
			\end{align*}where in the last equality we have used the fact that $x^2+y^2=1$. Now, let $x:=\cos(t)$ and let $y:=\sin (t)$. Then
			\begin{equation*}
			\mu_1=\max_{0\leq t\leq 2\pi}\biggl(\frac{3}{2}+\cos^2(t)+\cos(t)\cdot\sin(t)\biggr).
			\end{equation*}Now,
			\begin{equation*}
			\frac{d}{dt}\biggl(\frac{3}{2}+\cos^2(t)+\cos(t)\cdot\sin(t)\biggr)=\cos(2t)-\sin(2t),
			\end{equation*}which has value $0$ for $t=\frac{\pi}{8}$ and $t=\frac{5\pi}{8}$. In particular, for $t=\frac{5\pi}{8}$ we get that
			\begin{align*}
			\mu_M^H&=\frac{3}{2}+\cos^2\biggl(\frac{5\pi}{8}\biggr)+\cos\biggl(\frac{5\pi}{8}\biggr)\cdot\sin\biggl(\frac{5\pi}{8}\biggr)\\
			&=2-\frac{1}{\sqrt{2}}\\
			&\cong 1,29.
			\end{align*}For $t=\frac{\pi}{8}$ we get that 
			\begin{align*}
			\mu_1&=\frac{3}{2}+\cos^2\biggl(\frac{\pi}{8}\biggr)+\cos\biggl(\frac{\pi}{8}\biggr)\cdot\sin\biggl(\frac{\pi}{8}\biggr)\\
			&=2+\frac{1}{\sqrt{2}}\\
			&\cong 2,71.
			\end{align*}
			In particular, $\mu_1>h'$. This proves that the $\geq$ of Lemma \ref{|h|=N} is, in general, not an equality.
			\end{ex}
			Let's end this section by proving that there is another family of bipartite hypergraphs with $\mu_1=h'$.
			\begin{lem}
			Let $\Gamma$ be a bipartite graph on $N$ nodes such that $\big|h\big|=N$ for every $h\in H$. Then
			\begin{equation*}
			\mu_1=h'=N.
			\end{equation*}
			\end{lem}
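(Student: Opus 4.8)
The statement to prove is that for a bipartite hypergraph $\Gamma$ on $N$ nodes with $|h|=N$ for every $h\in H$, one has $\mu_1 = h' = N$. My plan is to combine the two bounds already established in this section: Lemma~\ref{|h|=N} gives the lower bound $\mu_1 \ge h'$, and the remark following it notes that $h' = N$ precisely when every hyperedge has the maximal cardinality $N$, which is exactly our hypothesis. So $h' = N$ is immediate, and we get $\mu_1 \ge N$ for free. The entire content of the proof is therefore the reverse inequality $\mu_1 \le N$.

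For the upper bound I would use the Rayleigh quotient characterization \eqref{lambda_N^V} of $\mu_1$ via $L^V$, namely
\begin{equation*}
\mu_1 = \max_{f:\,\sum_{v\in V}\deg v\cdot f(v)^2=1}\ \sum_{h\in H}\biggl(\sum_{v_i\text{ input of }h}f(v_i)-\sum_{v^j\text{ output of }h}f(v^j)\biggr)^2.
\end{equation*}
Fix any admissible $f$. Because $\Gamma$ is bipartite with $|h|=N$, every hyperedge contains \emph{all} $N$ vertices, with the vertices of one part as inputs and the other part as outputs (or vice versa); in particular $\deg v = M := |H|$ for every $v$. Thus $\delta f(h) = \pm\bigl(\sum_{v\in V_1}f(v) - \sum_{v\in V_2}f(v)\bigr)$, the same value (up to sign) for every $h$. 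Writing $S := \sum_{v\in V_1}f(v) - \sum_{v\in V_2}f(v)$, the objective is $M\cdot S^2$. Now I would bound $S^2$: by Cauchy--Schwarz applied to the vectors $(\pm f(v))_{v\in V}$ and the all-ones vector (each with $N$ entries), $S^2 \le N\sum_{v\in V}f(v)^2$. Since $\deg v = M$ for all $v$, the normalization $\sum_v \deg v\, f(v)^2 = 1$ reads $M\sum_v f(v)^2 = 1$, i.e. $\sum_v f(v)^2 = 1/M$. Hence $S^2 \le N/M$ and the objective satisfies $M\cdot S^2 \le N$. Taking the max over $f$ gives $\mu_1 \le N$.

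Combining $\mu_1 \ge N$ (from Lemma~\ref{|h|=N}) and $\mu_1 \le N$ (just shown) yields $\mu_1 = N = h'$, which is the claim. The only mild subtlety — the ``main obstacle,'' though it is minor — is being careful that the bipartite-plus-full-cardinality hypothesis forces every hyperedge to partition $V$ exactly into $V_1$ and $V_2$ (so that $\delta f(h)$ really is the same for all $h$ and $\deg v = M$ is constant); once that structural observation is in place the estimate is a one-line Cauchy--Schwarz. Alternatively one could run the same computation through the $L^H$-side Rayleigh quotient \eqref{lambda_N^H}, which here has only the single variable $\gamma(h)$ constrained by $\sum_h \gamma(h)^2 = 1$, but the $L^V$ route above seems cleanest.
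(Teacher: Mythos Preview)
Your proof is correct. Both you and the paper use the $L^V$ Rayleigh quotient and the key structural observation that, under the hypotheses, every hyperedge has exactly $V_1$ as inputs and $V_2$ as outputs (or vice versa), so $\deg v=M$ for all $v$ and $\delta f(h)^2$ is the same for every $h$. The difference is in how the computation is finished. You split the claim into the lower bound $\mu_1\ge h'=N$ (quoting Lemma~\ref{|h|=N}) and a separate upper bound $\mu_1\le N$ obtained by a direct Cauchy--Schwarz estimate on $S=\sum_{V_1}f-\sum_{V_2}f$. The paper instead shows that the full Rayleigh quotient \emph{equals} $\bigl(\sum_{v}f(v)\bigr)^2/\sum_v f(v)^2$, recognizes this as the Rayleigh quotient of a bipartite hypergraph with a single hyperedge on $N$ vertices, and then invokes Example~\ref{ex1} to conclude $\mu_1=N$. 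Your route is a bit more self-contained (it does not need Example~\ref{ex1}); the paper's route makes the conceptual point that the many-hyperedge case literally reduces to the one-hyperedge case. Either is perfectly adequate here.
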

			\begin{proof}Let's first observe that, in this case,
				\begin{equation*}
				h'=\frac{\sum_{h}\big|h\big|^2}{\sum_{h}\big|h\big|}=\frac{\big|H\big|\cdot N^2}{\big|H\big|\cdot N}=N.
				\end{equation*}
			Now observe that, for any bipartite hypergraph,
			\begin{align*}
			\mu_1&=\max_{f}\frac{\sum_{h\in H}\biggl(\sum_{v_i \text{ input of }h}f(v_i)-\sum_{v^j \text{ output of }h}f(v^j)\biggr)^2}{\sum_{v\in V}\deg v\cdot f(v)^2}\\
			&=\max_{f}\frac{\sum_{h\in H}\biggl(\sum_{v_i \in h:f(v_i)>0}f(v_i)-\sum_{v^j \in h:f(v^j)<0}f(v^j)\biggr)^2}{\sum_{v\in V}\deg v\cdot f(v)^2}\\
			&=\max_{f>0}\frac{\sum_{h\in H}\biggl(\sum_{v\in h}f(v)\biggr)^2}{\sum_{v\in V}\deg v\cdot f(v)^2}.
			\end{align*}In our particular case, since $\{v\in h\}=\{v\in V\}$ for every $h$ and since $\deg_v=\big|H\big|$ for every $v$, we have that
			\begin{align*}
			\mu_1&=\max_{f>0}\frac{\big|H\big|\cdot\biggl(\sum_{v\in V}f(v)\biggr)^2}{\big|H\big|\cdot\sum_{v\in V}\cdot f(v)^2}\\
			&=\max_{f>0}\frac{\biggl(\sum_{v\in V}f(v)\biggr)^2}{\sum_{v\in V} f(v)^2}\\
			&=\mu_1',
			\end{align*}where $\mu_1'$ is the largest eigenvalue of a bipartite hypergraph on $N$ nodes with only one hyperedge. As we have seen in Example \ref{ex1}, $\mu_1'=N$, therefore $\mu_1=h'=N$.
			\end{proof}

				\section{Isospectral hypergraphs}\label{Isospectral hypergraphs}
				We already know that two graphs cannot always be distinguished by their spectra, but the spectrum reveals  some important properties. \emph{Is the graph bipartite? How many connected components does it have? How many cycles? Is it complete?} -- these are all questions that can be answered using the spectrum of the Laplace operator for graphs, so even if it does not distinguish the details of graphs, it does partition them into important families. We expect something similar to happen for hypergraphs.

For instance, the spectrum of $L^V$ of all complete bipartite graphs with the same number of vertices is the same. (The multiplicity of the eigenvalue $0$ of $L^H$, however, distinguishes between them.) For hypergraphs, a new phenomenon arises. 
					\begin{lem}\label{lemisoL^H}
						The spectrum of $L^V$ and $L^H$ doesn't change if we reverse the role of a vertex in all the hyperedges in which it is contained, i.e. if we let it become an input where it is an output and we let it become an output where it is an input.
					\end{lem}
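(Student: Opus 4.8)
The plan is to realise the passage from $\Gamma$ to the modified hypergraph $\Gamma'$ as an explicit conjugation of operators. Let $v_0$ be the vertex whose role is reversed. Define the \emph{sign flip at $v_0$}, $S:\{f:V\to\mathbb{R}\}\to\{f:V\to\mathbb{R}\}$, by $(Sf)(v_0)=-f(v_0)$ and $(Sf)(v)=f(v)$ for $v\neq v_0$. Reversing the role of $v_0$ changes neither the set of hyperedges containing any given vertex nor the hyperedge scalar product $(\cdot,\cdot)_H$; in particular every degree is unchanged, so $(\cdot,\cdot)_V$ is the same inner product for $\Gamma$ and $\Gamma'$, and $S$ is an involutive isometry of it (because $\deg v_0\cdot f(v_0)^2$ is preserved). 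Hence $S^{-1}=S$ and $S^{*}=S$ with respect to $(\cdot,\cdot)_V$.

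The computational heart is to show that the boundary operators are intertwined by $S$: writing $\delta$ for the boundary operator of $\Gamma$ and $\delta'$ for that of $\Gamma'$, I claim $\delta'\circ S=\delta$ as maps $\{f:V\to\mathbb{R}\}\to\{\gamma:H\to\mathbb{R}\}$ (fixing once and for all the reference orientation of each hyperedge in both $\Gamma$ and $\Gamma'$). This is checked hyperedge by hyperedge: if $v_0\notin h$, then $\Gamma$ and $\Gamma'$ agree on $h$ and $Sf$ agrees with $f$ there, so the identity is immediate; if $v_0$ is a catalyst of $h$, its contributions to $\delta f(h)$ and to $\delta'(Sf)(h)$ each cancel internally, again leaving the identity; and if $v_0$ is an input (and not a catalyst) of $h$, then $v_0$ is an output of $h$ in $\Gamma'$, so the summand $f(v_0)$ which appears in the ``input'' sum of $\delta f(h)$ reappears, as $-(Sf)(v_0)=f(v_0)$, in the ``output'' sum of $\delta'(Sf)(h)$, and a one-line bookkeeping check gives $\delta'(Sf)(h)=\delta f(h)$. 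The case where $v_0$ is an output of $h$ is entirely symmetric.

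Given the intertwining $\delta'=\delta\circ S$ and $S^{*}=S$, one has $(\delta')^{*}=S\circ\delta^{*}$, and therefore
\begin{align*}
L^V_{\Gamma'}&=(\delta')^{*}\delta'=S\,\delta^{*}\delta\,S=S\,L^V_{\Gamma}\,S,\\
L^H_{\Gamma'}&=\delta'(\delta')^{*}=\delta\,S\,S\,\delta^{*}=\delta\,\delta^{*}=L^H_{\Gamma}.
\end{align*}
Thus $L^H$ is literally unchanged, while $L^V_{\Gamma'}$ is conjugate to $L^V_{\Gamma}$ by the invertible map $S$, so the two have the same eigenvalues with the same multiplicities; this proves the Lemma. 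Equivalently, at the level of the incidence matrix one has $\mathcal{I}'=P\mathcal{I}$, where $P=\mathrm{diag}(1,\dots,1,-1,1,\dots,1)$ has its $-1$ in the $v_0$-row; $P$ commutes with the diagonal degree matrix $D$, and since $L^V=D^{-1}\mathcal{I}\mathcal{I}^{\top}$ and $L^H=\mathcal{I}^{\top}D^{-1}\mathcal{I}$, the same two identities $L^V_{\Gamma'}=PL^V_{\Gamma}P$ and $L^H_{\Gamma'}=L^H_{\Gamma}$ fall out. The only step requiring genuine attention is the hyperedge-by-hyperedge verification that $\delta'\circ S=\delta$; everything after that is formal.
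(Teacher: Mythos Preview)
Your proof is correct and takes a genuinely different route from the paper's argument. The paper works with the Rayleigh quotient for $L^H$ coming from the min--max principle,
\[
\frac{\sum_{v}\frac{1}{\deg v}\Bigl(\sum_{h_{\text{in}}}\gamma(h_{\text{in}})-\sum_{h_{\text{out}}}\gamma(h_{\text{out}})\Bigr)^{2}}{\sum_{h}\gamma(h)^{2}},
\]
and simply observes that swapping the input/output role of a single vertex $v_0$ negates the expression inside the square for $v=v_0$ and leaves all other summands alone; since the term is squared, the Rayleigh quotient, and hence the spectrum, is unchanged. Your approach instead builds the sign-flip isometry $S$ on vertex functions, verifies the intertwining $\delta'=\delta S$, and deduces $L^H_{\Gamma'}=L^H_{\Gamma}$ and $L^V_{\Gamma'}=S\,L^V_{\Gamma}\,S$. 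The paper's argument is shorter and exploits the variational machinery already in place; yours is more structural, avoids the min--max principle altogether, and delivers extra information: you see that $L^H$ is \emph{literally} the same operator (not just isospectral), and you obtain an explicit bijection $f\mapsto Sf$ between the $L^V$-eigenfunctions of $\Gamma$ and $\Gamma'$. Your incidence-matrix reformulation $\mathcal{I}'=P\mathcal{I}$ is a clean way to package the same idea.
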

					\begin{proof}By the Min-max Principle, the spectrum of $L^H$ is given by the \emph{min-max} of the Rayleigh quotient, which is now
						\begin{equation*}
									\frac{\sum_{v\in V}\frac{1}{\deg v}\cdot \biggl(\sum_{h_{\text{in}}: v\text{ input}}\gamma(h_{\text{in}})-\sum_{h_{\text{out}}: v\text{ output}}\gamma(h_{\text{out}})\biggr)^2}{\sum_{h\in H}\gamma(h)^2}.
						\end{equation*}Now, since for each $v\in V$ we have
						\begin{equation*}
						\biggl(\sum_{h_{\text{in}}: v\text{ input}}\gamma(h_{\text{in}})-\sum_{h_{\text{out}}: v\text{ output}}\gamma(h_{\text{out}})\biggr)^2=\biggl(\sum_{h_{\text{out}}: v\text{ output}}\gamma(h_{\text{out}})-\sum_{h_{\text{in}}: v\text{ input}}\gamma(h_{\text{in}})\biggr)^2,
						\end{equation*}the Rayleigh quotient and therefore the spectrum of $L^H$ (and $L^V$) doesn't change if we reverse the role of a vertex in all the hyperedges in which it is contained.
					\end{proof}
\begin{ex}
Let $\Gamma=(V,E)$ be a connected graph. Lemma \ref{lemisoL^H} tells us that, if we reverse the role of a vertex $v\in V$ in all the edges in which it is contained, the spectrum of $\Gamma$ doesn't change. This transformation actually creates an oriented graph where all edges that have $v$ as an endpoint have either two inputs or two outputs. But this situation is not interesting from both the chemical point of view (where we assume that there are always both inputs and outputs) and the mathematical point of view, because in graph theory one always assigns an orientation to an edge by choosing exactly one input and exactly one output. Therefore, in order to have consistency with our theory, we should assume that every time we apply the operation described in Lemma \ref{lemisoL^H} to a vertex $v$, we also apply it to all its neighbors. For the same reason, we should also apply it to the neighbors of its neighbors and therefore, by induction, since we are assuming that $\Gamma$ is connected, we should apply this operation to all vertices of $\Gamma$. In conclusion, Lemma \ref{lemisoL^H} in the case of graphs tells us that the spectrum doesn't change if we reverse the orientation of every edge in a given connected component.
\end{ex}

				\end{document}